\newtheorem{theorem}{Theorem}[section]
\newtheorem{lemma}[theorem]{Lemma}
\newtheorem{corol}[theorem]{Corollary}
\newtheorem{proposition}[theorem]{Proposition}
\newtheorem{example}[theorem]{Example}
\newcommand{\ipl}{\langle}
\newcommand{\ipr}{\rangle}
\DeclareMathOperator{\Proj}{\mathbf{P}}
\newcommand\R{\mathbb{R}}
\newcommand\N{\mathbb{N}}
\DeclareMathOperator{\argmin}{arg\, min}
\newcommand\norm[1]{\|#1\|}
\newcommand\set[1]{\{#1\}}
\begin{document}


\title{
%
%
%
       Range-relaxed criteria for the choosing Lagrange multipliers
       in nonstationary iterated Tikhonov method}
%

\setcounter{footnote}{1}

\author{
R.~Boiger%
\thanks{Materials Center Leoben Forschung Gmbh, Roseggerstraße 12,
        8700 Leoben, Austria,
        \href{mailto:romana.boiger@mcl.at}{\tt romana.boiger@mcl.at};
        former: Insitut f\"ur Mathematik, Alpen-Adria Universit\"at Klagenfurt,
        Universit\"atsstrasse 65-67, 9020 Klagenfurt, Austria.}
\and
A.~Leit\~ao%
\thanks{Department of Mathematics, Federal University of St.\,Catarina,
        P.O.\,Box 476, 88040-900 Florian\'opolis, Brazil,
        \href{mailto:acgleitao@gmail.com}{\tt acgleitao@gmail.com}.}
\and
B.~F.~Svaiter%
\thanks{IMPA, Estr.\,Dona Castorina 110, 22460-320 Rio de Janeiro, Brazil,
       \href{mailto:benar@impa.br}{\tt benar@impa.br}.} }

\date{\small \today}

\maketitle

\begin{abstract}
In this article we propose a novel {\em nonstationary iterated Tikhonov} (NIT)
type method for obtaining stable approximate solutions to ill-posed operator
equations modeled by linear operators acting between Hilbert spaces.
Geometrical properties of the problem are
used to derive a new strategy for choosing
the sequence of regularization parameters (Lagrange multipliers) for the NIT
iteration.
Convergence analysis for this new method is provided.
Numerical experiments are presented for two distinct applications: I) A 2D
elliptic parameter identification problem (Inverse Potential Problem);
II) An image deblurring problem.
The results obtained validate the efficiency of our method compared with
standard implementations of the NIT method (where a geometrical choice
is typically used for the sequence of Lagrange multipliers).
\end{abstract}

\bigskip
\noindent {\small {\bf Keywords.}
Ill-posed problems; Linear operators; Iterated Tikhonov method;
Nonstationary methods.}
\medskip

\noindent {\small {\bf AMS Classification:} 65J20, 47J06.}

\section{Introduction} \label{sec:intro}

In this article we propose a new {\em nonstationary Iterated Tikhonov} (NIT)
type method~\cite[Sec.~1.2]{BS87} for obtaining stable approximations of linear
ill-posed problems.
The Lagrange multiplier is chosen so as to guarantee the residual of
the next iterate to be in a \emph{range}.
Previous strategies for choosing the Lagrange multiplier in each iteration
of NIT type methods either prescribe (\emph{a priori}) a geometrical increase
of this multiplier \cite{HG98} or require (\emph{a posteriori}) the residual
at the next iterate to assume a prescribed value which depends on the current
residual.

In those NIT methods that prescribe a geometrical increase of the Lagrange
multipliers, the use of a too large geometric factor may lead to numerical
instabilities and failure of convergence, whereas the use of a too small
factor leads to a slow convergent method (see Figures~\ref{fig:compare-sIT-nIT}
and~\ref{fig:nIT-unstable}); these features are highly dependent on the problem
at hand and, in general, it is not clear how to adequately choose the geometric
factor.

In those NIT methods that require the residual at the next iterate to assume
a prescribed value, at each iteration one needs to solve a nonlinear equation
which involves the resolvent of an ill-posed operator \cite{Ha97, DoHa13}.
This is accomplished by means of iterative methods (\emph{e.g.}\ Newton)
which require, at each of their steps, the solution of a linear system
for a different operator.
Consequently, the number of iterations required for these methods
does not fully quantify their computational costs.

The main contribution in this article is the proposal of a novel
\emph{a posteriori} strategy for choosing the  Lagrange multipliers
in NIT methods.
Since  we prescribe the residual of the next iterate to be in a range,
the set of feasible Lagrange multipliers, at each iteration,
is a \emph{non-degenerate interval},  which renders feasible their economical
computation (as explained later on).
Many relevant theoretical convergence properties present at previous
\emph{a posteriori} methods~\cite{EngHanNeu96} (e.g., residual
convergence rates, stability, semi-convergence) still hold for our novel strategy.
We also explore the feature of a feasible interval for the Lagrange multiplier
to speed up a Newton-like method for computing it.
The resulting method proves, in our preliminary numerical experiments,
to be more efficient {\color{black} (with respect to computational cost)}
than the geometrical choice of the Lagrange multipliers
\cite{HG98}, typically used in implementations of NIT type methods,
for low noise levels.

The \textit{inverse problem} we are interested in consists of determining an
unknown quantity $x \in X$ from the set of data $y \in Y$, where $X$ and $Y$ are
Hilbert spaces {\color{black} with norms $\norm{\cdot}_X$ and $\norm{\cdot}_Y$ respectively}.
In practical situations, one does not know the data exactly;
instead, only approximate measured data $y^\delta \in Y$ are available with
\begin{equation} \label{eq:noisy-data}
\norm{ y^\delta - y }_Y \ \le \ \delta \, ,
\end{equation}
where $\delta > 0$ is the (known) noise level. The available data $y^\delta$
are obtained by indirect measurements of the parameter $x$, this process
being described by the ill-posed operator equation
\begin{equation} \label{eq:ip}
A \, x \ = \ y \, ,
\end{equation}
where $A: X \to Y$ is a bounded linear operator, whose inverse $A^{-1}: R(A) \to X$
either does not exist, or is not continuous.
Consequently, approximate solutions are extremely sensitive to noise in the data.

Linear ill-posed problems are commonly found in applications ranging from image
analysis to parameter identification in mathematical models.
There is a vast literature on iterative methods for the stable solution of \eqref{eq:ip}.
We refer the reader to the books
\cite{Gr84, Hof86, Bau87, Lo89, Mor93, BakKok04, Kir96, EngHanNeu96, NatWue01, KalNeuSch08}
and the references therein.
Iterated Tikhonov (IT) type methods for solving the ill-posed problem \eqref{eq:ip}
are defined by {\color{black} an} iteration formula
\begin{equation*}
  x_k^\delta \ = \
  \argmin_{x\in X} \big\{ \lambda_k {\color{black} \| A x - y^\delta \|_Y^2}
                          + {\color{black} \| x - x_{k-1}^\delta \|_X^2} \big\} \, ,
\end{equation*}
{\color{black} that} corresponds to
\begin{align} \label{eq:it}
  \begin{aligned}
  x_k^\delta
  &= x_{k-1}^{\delta} -
  (I + \lambda_k A^*A)^{-1} \, \lambda_k A^* (A x_{k-1}^\delta - y^\delta ) \, ,\\
    &=
      (\lambda_k^{-1}I + A^*A)^{-1}     \left[\lambda_k^{-1}x_{k-1}^{\delta}+
      A^* y^\delta  \right]
  \end{aligned}
\end{align}
where $A^*: Y \to X$ is the adjoint operator to $A$.
The parameter $\lambda_k > 0$ can be viewed as the Lagrange multiplier of the problem
of projecting $x^\delta_{k-1}$ onto a levelset of $\norm{Ax - y^\delta}^2$.
If the sequence $\{ \lambda_k = \lambda \}$ is constant, iteration \eqref{eq:it}
is called {\em stationary} IT (or SIT), otherwise it is denominated {\em nonstationary}
IT (or NIT).
{\color{black} To simplify the notation, from now on we will use $\norm{\cdot}$
instead of $\norm{\cdot}_X$ or $\norm{\cdot}_Y$, whenever the norm under consideration
is clearly understood.}

In the NIT methods, each $\lambda_k$ is either chosen \emph{a priori} (e.g., in geometric
progression) or it is chosen \emph{a posteriori}.
In the  \emph{a posteriori} variants, $\lambda_k$ is chosen so that the next
iterate has a prescribed residual which is either a fixed fraction of the
current residual or a fraction which depends also on the noise level
\cite[eq.\ 2.11b in Alg.\ 1]{DoHa13}.
In other words, $\lambda_k$ is chosen in~\eqref{eq:it} so that
\begin{align}
  \label{eq:it.e}
  \norm{Ax_k^\delta-y^\delta}=\Phi(\norm{Ax_{k-1}^\delta-y^\delta},\delta)
\end{align}
(see \cite{Do12} for yet another strategy). We propose $\lambda_k$ to be {\color{black} chosen} so that
\begin{align}
  \label{eq:it.e2}
  \delta\leq   \norm{Ax_k^\delta-y^\delta}\leq\Psi(\norm{Ax_{k-1}^\delta-y^\delta},\delta).
\end{align}
{\color{black}
The upper bound in \eqref{eq:it.e2} for the residual depends on the current residual
and the noise level as in \cite{DoHa13}; however we propose here a new formula
to define this upper bound (see eq.\ \eqref{eq:p-proj}).}

The SIT method for solving \eqref{eq:ip} was considered in \cite{Lo89, Gr84},
where a well developed convergence analysis can be found
{\color{black} (see also Lardy \cite{Lar75}, where the particular choice $\lambda_k = 1$ is analyzed)}.
It is worth distinguishing between the SIT method and the \emph{iterated Tikhonov
methods of order $n$} \cite{KiCh79, Eng87, Sch93a}, where the number of iterations
(namely $n$) is fixed.
In this case $\lambda_k = \lambda > 0$, $k=0,\dots,n-1$, and $\lambda$ plays the
role of the regularization parameter.

The NIT method was addressed by many authors, e.g. \cite{Fak81, HG98, BS87}.
In numerical implementations of this method, the geometrical choice $\lambda_k = q^k$,
$q>1$, is a commonly adopted strategy and we shall refer to the resulting
method as gNIT method ({\em geometrical nonstationary IT method}).

The numerical performances of NIT type methods are superior to the ones
of SIT type methods
{\color{black} in terms of number of iterations and computational time
required to attain a predefined accuracy.}
This fact is illustrated by Example \ref{ex:gNIT-sIT}.

\begin{example} \label{ex:gNIT-sIT}
A linear system modeled by the Hilbert matrix $H^{25 \times 25}$ is
considered in Figure~\ref{fig:compare-sIT-nIT} (random noise of level
$\delta = 0.001\%$ is used).
{\color{black} In this benchmark problem the SIT method is tested with
$\lambda_k = 2$ (RED), while the gNIT method is tested with
$\lambda_k = 2^k$ (BLACK) and $\lambda_k = 3^k$ (BLUE).
In all these tests the same linear solver was used.
}
\end{example}

The computation $\lambda_k = q^k$ in the  gNIT is straightforward; however,
the choice of $q > 1$ is \emph{exogenous} to \eqref{eq:ip},
\eqref{eq:noisy-data} and it is not clear which are the good values for $q$.
Indeed, as shown in the next example, increasing the constant $q$ may lead
either to faster convergence or failure to converge:

\begin{example} \label{ex:gNIT-unstable}
We set $A = H^{25 \times 25}$, $X = Y = \R^{25}$ and random noisy data with
$\delta = 10^{-5}\,\%$.
{\color{black} In Figure~\ref{fig:nIT-unstable} the gNIT method is tested
with $\lambda_k = 2^k$ (BLACK), $\lambda_k = 3^k$ (BLUE) and
$\lambda_k = 4^k$ (ORANGE); in the last test there is not convergence.
}
\end{example}

The above described issues motivated  the use of \emph{a posteriori} choices
for the Lagrange multipliers, which requires the residual at the next iterate
to assume a prescribed value dependent on the current residual and
also on the noise level.
\medskip

\begin{figure}[t]
\centerline{\includegraphics[width=\textwidth]{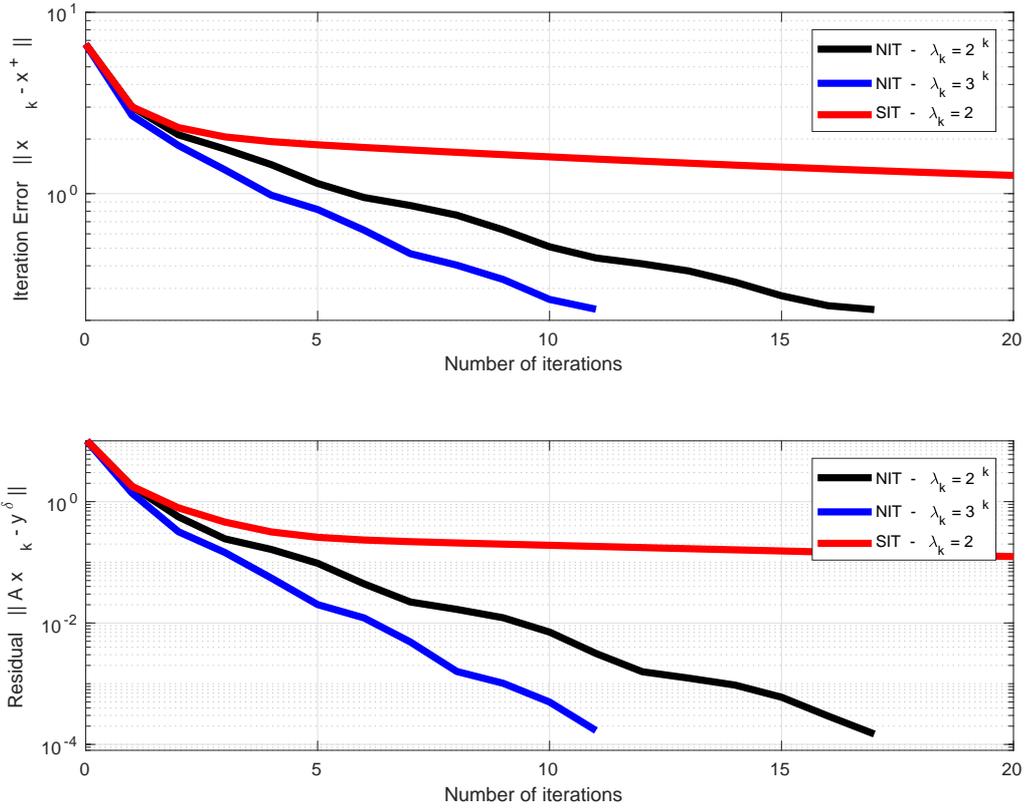}}
\vskip-0.5cm
\caption{\small Comparison between SIT and NIT type methods:
$A = H^{25 \times 25}$ is the square Hilbert matrix; artificial random noise
($\delta = 10^{-3} \%$) is added to the data. The stopping criteria (discrepancy
principle with $\tau = 2$) is reached after 17 steps (BLACK), 11 steps (BLUE)
and 59275 steps (RED).}
\label{fig:compare-sIT-nIT}
\end{figure}

\begin{figure}[t]
\centerline{\includegraphics[width=\textwidth]{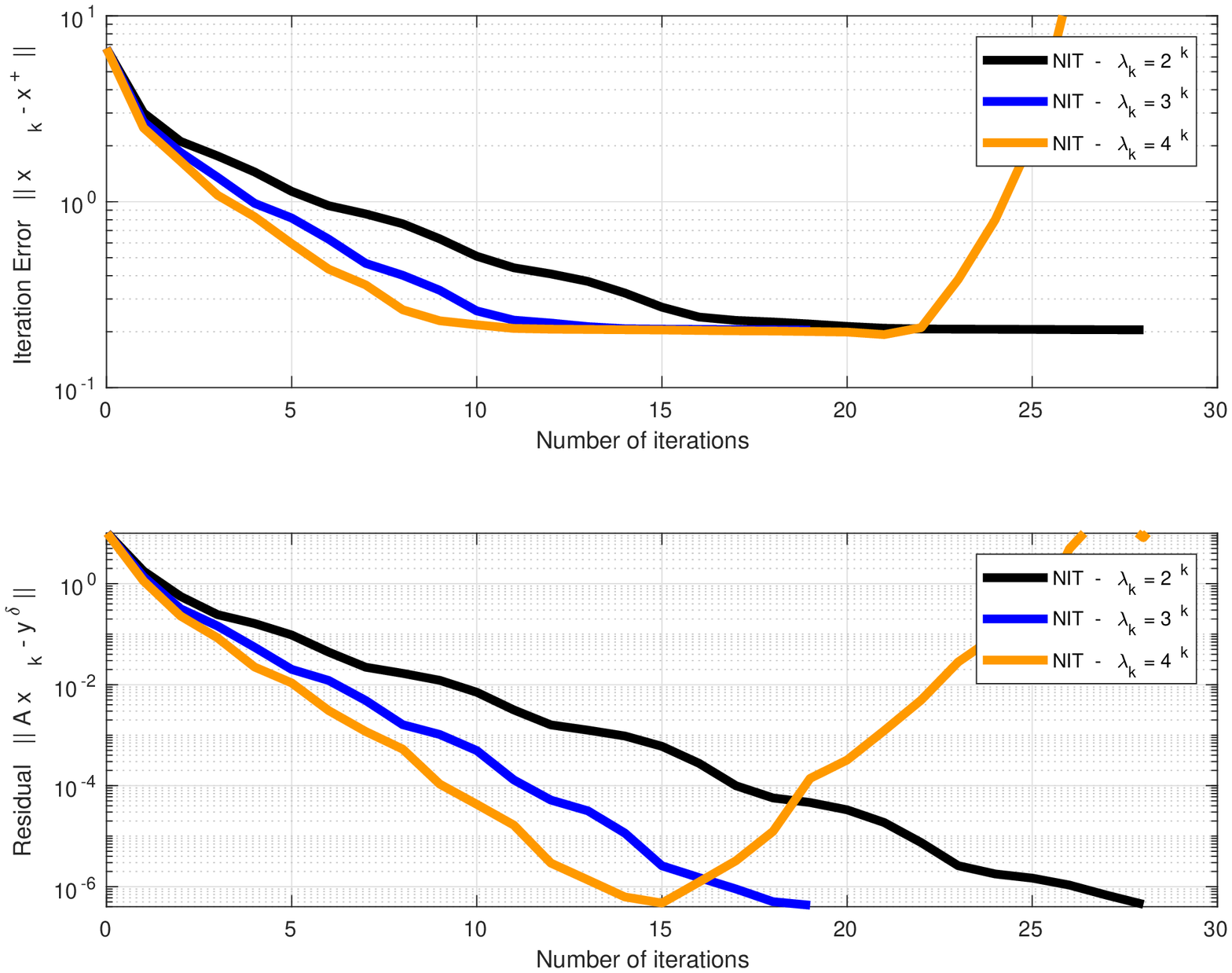}}
\vskip-0.5cm
\caption{\small Unstable behavior of gNIT type methods:
matrix $A \in \R^{25 \times 25}$ as in Figure~\ref{fig:compare-sIT-nIT};
artificial random noise ($\delta = 10^{-5} \%$) is added to the data.
The stopping criteria (discrepancy principle with $\tau = 2$) is reached
after 27 steps (BLACK), 18 steps (BLUE). In the last implementation (ORANGE),
the gNIT method becomes unstable before the stopping criteria is reached.}
\label{fig:nIT-unstable}
\end{figure}

\noindent
Next we briefly review some relevant convergence results for IT type methods.

$\bullet$ \ In \cite{BS87} Brill and Schock proved that, in the exact data case, the
NIT method \eqref{eq:it} converges to a solution of $Ax = y$ if and only if
$\sum \lambda_k = \infty$.
Moreover, a convergence rate result was established under the additional assumption
$\sum \lambda_k^2 < \infty$.

$\bullet$ \ The assumptions needed in \cite{BS87} in order to derive convergence
rates are neither satisfied for the sIT, nor for the NIT with the geometrical choice
$\lambda_k = q^k$, $q > 1$.

$\bullet$ \ In \cite{HG98} rates of convergence are established for the stationary Lardy's
method \cite{Lar75} as well as for the NIT with geometrical choice of $\lambda_k$.
Under the source condition {\color{black} $x^\dag \in \mathop\mathrm{Range}((A^*A)^\nu)$},
where $x^\dag = A^\dag y$ is the normal solution of $Ax = y$%
\footnote{\color{black} I.e., $x^\dag$ is the unique vector satisfying $x^\dag \in
\mathop\mathrm{Dom}(A) \cap \mathop\mathrm{Ker}(A)^\perp$
and $A x^\dag = y$, where $\mathop\mathrm{Dom}(A)$ and $\mathop\mathrm{Ker}(A)$
stand for the domain and the kernel of $A$ respectively.},
and $\nu > 0$,%
\footnote{See \cite[Theorem 2.1]{HG98} for details on the positive the scalar $\nu$.}
the linear rate of convergence $\norm{x_k - x^\dag} = O(q^{k\nu})$ is proven.
\medskip

\noindent
The article is organized as follows:
In Section~\ref{sec:rrNIT} we introduce the new method (rrNIT), which is proposed and
analyzed in this manuscript.
Moreover, a detailed formulation of this method is given, and some preliminary estimates
{\color{black} for the Lagrange multipliers $\lambda_k$} are also derived.
In Section~\ref{sec:conv-anal} a convergence analysis of the rrNIT method is presented.
In Section~\ref{sec:num-implementation} we discuss the algorithmic implementation of the
rrNIT method. In particular, we address the challenging numerical issue of efficiently
computing the Lagrange multipliers $\lambda_k$.
Section~\ref{sec:num-experiments} is devoted to numerical experiments. The
{\em Image deblurring} problem and the {\em Inverse Potential
Problem} are considered in Subsections~\ref{ssec:num-id} and~\ref{ssec:num-ipp}
respectively.
Section~\ref{sec:conclusion} is dedicated to final remarks and conclusions.


\section{Range-relaxed non-stationary iterated Tikhonov method} \label{sec:rrNIT}

In this section first we propose in Subsection~\ref{sec:orrpm} a
conceptual projection method for solving~\eqref{eq:noisy-data},
\eqref{eq:ip}.
In this method, each iterate is obtained projecting the previous one
onto a level set of the residual function. The level set is
prescribed to belong to a range of level sets, instead of being a
single one.
Second, we propose in Subsection~\ref{ssec:true-alg} an implementable
version of the conceptual method where the projection is computed via
Lagrange multipliers.
Finally, we derive some basic properties of the new proposed method.

The implementable method proposed here happens to be a new NIT method
where, in each iteration, the set of feasible choices for the Lagrange
multipliers is an interval, instead of a single real
number. For this reason, we call the new method a range-relaxed Non-stationary
Iterative Tikhonov Method (rrNIT).


For the remaining of this article we suppose that the following assumptions
hold true:
\bigskip

\hfill\begin{minipage}{15cm}
{\bf (A1)} There {\color{black} exists} $x^\star \in X$ such that $A x^\star = y$, where
$y \in \mathop\mathrm{Range}(A)$ are the exact data.
\bigskip

{\bf (A2)} The operator $A: X \to Y$ is linear, bounded and ill-posed, i.e., even if the
operator $A^{-1}: R(A) \to X$ (the left inverse of $A$) exists, it is not continuous.
\end{minipage}

\subsection{A  Successive Orthogonal range-relaxed Projections Method}
\label{sec:orrpm}

We use the notation $\Omega_\mu$, for $\mu\geq 0$, to denote the $\mu$-levelset of
the residual functional $\norm{Ax-y^\delta}$, that is,
\begin{align} \label{eq:best-set}
 \Omega_\mu \ := \ \{x \in X \, : \, \norm{Ax - y^\delta}^2 \leq \mu^2\} \, .
\end{align}
The basic geometric properties of the levelsets $\Omega_\mu$, described next, are
instrumental in the forthcoming analysis.

\begin{proposition} 
  \label{pr:omega-mu}
  Let $\Omega_\mu$ be as in \eqref{eq:best-set}.
  \begin{enumerate}
  \item   For each $\mu \geq 0$, the set $\Omega_\mu$  is
    closed and convex. 
  \item 
    If $\mu'\geq\mu>0$
    then $\Omega_\mu \subset \Omega_{\mu'}$.
  \item  If
    $\mu\geq\delta$ then $A^{-1}(y) \subset \Omega_\mu$.
  \item 
    If  $\mu > \delta$
  then  $\Omega_\mu$ has non-empty interior.
  \end{enumerate}
\end{proposition}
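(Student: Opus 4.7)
The plan is to dispatch the four items separately, treating each as a direct consequence of elementary properties of the function $f(x) := \norm{Ax - y^\delta}^2$ together with assumption (A1) and the noise bound \eqref{eq:noisy-data}. None of these should require new ideas; the novelty in the paper is what is done with these levelsets, not the levelsets themselves.

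For item 1, I would observe that $f$ is the composition of the continuous affine map $x \mapsto Ax - y^\delta$ with the convex and continuous squared-norm function. Hence $f$ is continuous and convex, so $\Omega_\mu = f^{-1}([0,\mu^2])$ is both closed (preimage of a closed set) and convex (sublevel set of a convex function). Item 2 is immediate from the definition: any $x$ satisfying $\norm{Ax - y^\delta}^2 \leq \mu^2$ also satisfies $\norm{Ax - y^\delta}^2 \leq (\mu')^2$ whenever $\mu' \geq \mu$. For item 3, if $x \in A^{-1}(y)$ then $\norm{Ax - y^\delta} = \norm{y - y^\delta} \leq \delta \leq \mu$ by \eqref{eq:noisy-data}, so $x \in \Omega_\mu$.

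The geometric heart of the statement is item 4. By (A1) there exists $x^\star \in X$ with $Ax^\star = y$, so $\norm{Ax^\star - y^\delta} \leq \delta < \mu$. For any $x$ satisfying $\norm{x - x^\star} < (\mu - \delta)/\norm{A}$ (with the convention that the right-hand side is $+\infty$ if $\norm{A}=0$, in which case $\Omega_\mu = X$), the triangle inequality gives
\begin{equation*}
\norm{Ax - y^\delta} \leq \norm{A}\,\norm{x - x^\star} + \norm{Ax^\star - y^\delta} < (\mu - \delta) + \delta = \mu,
\end{equation*}
so the open ball of radius $(\mu - \delta)/\norm{A}$ around $x^\star$ lies in $\Omega_\mu$, exhibiting a nonempty interior.

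The only minor subtlety I anticipate is keeping the strict/non-strict inequalities aligned correctly in item 4 (we need $\mu > \delta$ precisely to get a strictly positive radius and hence a genuine open ball, rather than just a single point). Everything else is routine.
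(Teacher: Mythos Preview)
Your proposal is correct and follows essentially the same approach as the paper: items 1--3 are handled identically (continuity and convexity of the residual, trivial inclusion from the definition, and the noise bound \eqref{eq:noisy-data} applied to any preimage of $y$), and for item 4 the paper simply cites continuity of $x\mapsto\norm{Ax-y^\delta}$ at $x^\star$ where the value is $\leq\delta<\mu$, whereas you make this quantitative by exhibiting the explicit ball of radius $(\mu-\delta)/\norm{A}$. The arguments are the same in substance; your version is slightly more explicit.
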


\begin{proof}
{\color{black}
Item 1. follows from the continuity and convexity of $\norm{Ax-y^\delta}$ as a function
of $x$ together with definition \eqref{eq:best-set}.
Item 2. follows trivially from \eqref{eq:best-set}.
Item 3. follows from \eqref{eq:best-set} and assumption \eqref{eq:noisy-data}, because
$\norm{Ax^\star - y^\delta} = \norm{y - y^\delta} \leq \delta$.
The last item follows from this inequality together with definition \eqref{eq:best-set}
and the continuity of $\norm{Ax - y^\delta}$.}
\end{proof}

Notice that all available information about the solution set
$A^{-1}(y)$ is contained in \eqref{eq:noisy-data}, \eqref{eq:ip}.
Thus, in the absence of additional information, $\Omega_\delta$ is the
set of best possible approximate solutions for the inverse problem under
consideration.%
\footnote{I.e., given two elements in $\Omega_\delta$, it is not
  possible to distinguish which of them better approximates $x^\star$.}

Nevertheless the levelset $\Omega_\delta$ is, in general, unbounded and it is
desirable to exclude those approximate solutions with ``too large''
norms.
Moreover, very often a crude estimation $\hat x$ to the
solution of \eqref{eq:ip} is available.  In this context, it is
natural to consider the projection problem
\begin{align} \label{eq:pr-pb}
\left\{
 \begin{array}{ll} 
   \min_x          & \norm{x - \hat x}^2 \\
   \mathrm{s.t.} & \norm{A x - y^\delta}^2 \, \leq \, \mu^2 ,
 \end{array} \right.
\end{align}
where $\mu \geq 0$.
Observe that if $\norm{A\hat x-y^\delta}>\mu\geq\delta$,
then the solution of this projection problem is closer to $\Omega_\delta$
than $\hat x$ and has a smaller residual than $\hat x$.

The considerations in the preceding paragraph show that it is possible,
at least conceptually, to devise projection methods for solving our
ill-posed problem.
Let us briefly discuss the
{\color{black} conditioning of the projection problem} \eqref{eq:pr-pb} 
with respect to {\color{black} the} parameter $\mu$:
\begin{enumerate}
\item[(i)] for $0 \leq \mu < \delta$ {\color{black} the projection problem}
  \eqref{eq:pr-pb} may be unfeasible, that is, it may become the
  problem of projecting $\hat x$ onto an \emph{empty set};
\item[(ii)] for $\mu = \delta$, in view of (i), problem
  \eqref{eq:pr-pb} is in general ill-posed with respect to the
  parameter $\mu$;
\item[(iii)] for $\mu > \delta$ problem
  \eqref{eq:pr-pb} is well posed, and it is natural to expect that
  the larger the $\mu$ the better conditioned it becomes.
\end{enumerate}
%


A compromise between reducing the residual norm $\norm{Ax-y^\delta}$ and
preventing ill-posedness of the projection problem would be to choose
\[
\hat\mu \ = \ p\norm{A\hat x-y^\delta} + (1-p)\delta \, ,
\]
where $0 < p < 1$ quantify this compromise. However,
\begin{enumerate}
\item projecting $\hat x$ onto $\Omega_{\hat\mu}$, which is a \emph{pre-defined}
level set of $\norm{Ax-y^\delta}^2$, entails an additional numerical difficulty:
the projection has to be computed by solving a linear system where the
Lagrange multiplier is implicitly defined by an \emph{algebraic equation};
\item the projection of $\hat x$ onto any levelset $\Omega_{\mu}$ with
$\delta \leq \mu \leq \hat\mu$ is as good as (or even better than) the
projection of $\hat x$ onto $\Omega_{\hat\mu}$.
\end{enumerate}
In view of these observations,  we shall generate $x_k^\delta$ from
$\hat x = x_{k-1}^\delta$ by projecting it onto any one of the range
of convex sets $(\Omega_{\mu})_{\delta\leq\mu\leq\hat\mu}$,
that is, by solving the \emph{range-relaxed} projection problem of
computing $(x,\mu)$ such that
\begin{align} \label{eq:p-proj}
\left\{
  \begin{array}{ll} 
    \min_x          & \norm{x - \hat x}^2 \\
    \mathrm{s.t.} & ||Ax - y^\delta||^2 \, \leq \, \mu^2 \, , \qquad
    \delta \, \leq \, \mu \, \leq \, p\norm{A\hat x - y^\delta} + (1-p)\delta \, ,
  \end{array}
\right.
\end{align}
whenever $x_{k-1}^\delta \notin \Omega_\delta$. 
Observe that this problem has multiple solutions.
The advantage of this strategy is that the set of feasible
Lagrange multipliers of the above problem is an interval with
non-empty interior, as we will discuss latter, instead of a
single point.

In what follows we use the notation ${\Proj}_\Omega$ to denote the orthogonal
projection onto $\Omega$, for $\emptyset\neq\Omega\subset X$ closed and convex.
The discussion in the previous paragraph leads us to propose the
\emph{conceptual} successive orthogonal range-relaxed projection
Method for problem \eqref{eq:noisy-data}, \eqref{eq:ip} described in
Algorithm~\ref{alg:rrNIT-concept}.

\begin{algorithm}[b]
\begin{center}
\fbox{\parbox{13.3cm}{
$[1]$ choose an initial guess \ $x_0 \in X$;
\smallskip

$[2]$ choose \ $p \in (0,1)$ \ and \ $\tau > 1$;
\smallskip

$[3]$ for \ $k \geq 1$ \ do
\smallskip

\ \ \ \ $[3.1]$ compute\, $(x_{k}^\delta,\mu_k)$,

\smallskip

\ \ \ \ \ \ \ \ \ \ 
$x_{k}^\delta={\Proj}_{\Omega_{\mu_k}}(x_{k-1}^\delta)$,
%
\ $\delta \, \leq \, \mu_k \, \leq \,
                      p\norm{Ax_{k-1}^\delta - y^\delta} + (1-p) \delta$;
\smallskip

\ \ \ \ $[3.2]$ stop to iterate at step\, $k^* \geq 1$\, s.t.\,
        $\norm{Ax_{k^*}^\delta - y^\delta} < \tau\delta$ \ for the first time.
} }
\end{center} \vskip-0.5cm
\caption{Successive orthogonal range-relaxed projection method.} \label{alg:rrNIT-concept}
\end{algorithm}

\noindent
Since $\norm{Ax_k^\delta-y^\delta}=\mu_k$, the variable $\mu_k$ is
redundant, nevertheless, its use in the conceptual Algorithm~\ref{alg:rrNIT-concept}
{\color{black} clarifies} the kind of projection problem used to compute $x_k^\delta$.

By its definition, Algorithm 1 is a method of successive orthogonal
projections onto level sets of $\norm{Ax-y^\delta}$. As the level set
used in each iteration shall be in a range, we {\color{black} call} it a successive
orthogonal \emph{range-relaxed} projection method.
Each iterate is obtained from the previous one by projecting it onto
a convex set that contains the solution set and in which the residual in
any point is strictly smaller that the residual at the previous iterate.
Therefore the errors as well as the residuals are strictly decreasing
along the iterates and the sequence of iterates is bounded.

\subsection{A Range-relaxed non-stationary iterated Tikhonov algorithm}
\label{ssec:true-alg}


%


%
In order to derive an implementable version of the conceptual method
(Algorithm~\ref{alg:rrNIT-concept}) discussed in the previous section, we need to specify
how to compute the range-relaxed projections (at Step [3]).
With this aim, recall that the canonical Lagrangian of problem \eqref{eq:pr-pb} is 
\begin{align}
  \label{eq:lg}
  \mathcal{L}(x,\lambda) \ = \
  \dfrac{\lambda}{2}(\norm{Ax-y^\delta}^2-\mu^2) +
  \dfrac12\norm{x-\hat x}^2 \, .
\end{align}
For each $\lambda > 0$,  $\mathcal{L}(\cdot,\lambda) : X \to \R$
has a unique minimizer $x'$ which is also characterized as
the unique solution of $\nabla_x\mathcal{L}(x,\lambda)=0$, that is,
\begin{align*}
  x' =\hat x-\lambda(I+\lambda A^*A)^{-1}A^*(A\hat x-y^\delta).
\end{align*}
%
%
The next lemma summarizes the solution theory for the projection problem
\eqref{eq:pr-pb} by means of Lagrange multiplier~\cite[Sec.~5.7]{TR96}.
Recall that ${\Proj}_\Omega$ denotes the orthogonal projection onto $\Omega$.

\begin{lemma} \label{lem:p1}
Suppose $\norm{A\hat x-y^\delta}>\mu>\delta$. The following assertions are
equivalent
\begin{enumerate}
\item\label{it:p1-1} $x'=\Proj_{\Omega_\mu}(\hat x)$;
\item\label{it:p1-2} $x'$ is \emph{the} solution of \eqref{eq:pr-pb};
\item\label{it:p1-3a} $x'=\hat x-\lambda^*(I+\lambda^* A^*A)^{-1}A^*(A\hat x-y^\delta)$,
  $\lambda^*>0$ and
  \begin{align*}
    \norm{Ax'-y^\delta}=\mu  
  \end{align*}
\end{enumerate}
\end{lemma}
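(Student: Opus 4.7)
My plan is to prove the equivalences in the order $(1)\Leftrightarrow(2)$ and then $(2)\Leftrightarrow(3)$, appealing to standard convex programming (Lagrange multipliers / KKT theory) with the Slater condition coming from Proposition~\ref{pr:omega-mu}.

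First I would handle $(1)\Leftrightarrow(2)$ as essentially a rephrasing of the definition of the orthogonal projection. By Proposition~\ref{pr:omega-mu}, $\Omega_\mu$ is closed and convex, and since $\mu>\delta$ the set has nonempty interior (in particular it is nonempty). Hence $\Proj_{\Omega_\mu}(\hat x)$ exists, is unique, and coincides with the unique minimizer of $\norm{x-\hat x}^2$ over $x\in\Omega_\mu$, which is precisely problem \eqref{eq:pr-pb}.

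Next, for $(2)\Leftrightarrow(3)$, I would invoke the KKT theory for the convex program \eqref{eq:pr-pb}. Because $\mu>\delta$, Slater's condition holds (the interior of $\Omega_\mu$ is nonempty by Proposition~\ref{pr:omega-mu}(4)), so optimality is equivalent to the existence of a multiplier $\lambda^*\geq 0$ satisfying stationarity of the Lagrangian \eqref{eq:lg}, namely
\begin{equation*}
(x'-\hat x)+\lambda^* A^*(Ax'-y^\delta)=0,
\end{equation*}
together with primal feasibility $\norm{Ax'-y^\delta}\leq\mu$ and complementary slackness $\lambda^*(\norm{Ax'-y^\delta}^2-\mu^2)=0$. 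Solving the stationarity equation for $x'$ and rewriting it in the form used in \eqref{eq:it} yields the update
\begin{equation*}
x'=\hat x-\lambda^*(I+\lambda^*A^*A)^{-1}A^*(A\hat x-y^\delta),
\end{equation*}
so one direction of $(2)\Rightarrow(3)$ reduces to ruling out $\lambda^*=0$ and showing the constraint is active. If $\lambda^*=0$ the stationarity condition gives $x'=\hat x$; but the hypothesis $\norm{A\hat x-y^\delta}>\mu$ then contradicts primal feasibility. Hence $\lambda^*>0$, and by complementary slackness $\norm{Ax'-y^\delta}=\mu$.

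For the converse $(3)\Rightarrow(2)$, given $x'$ of the stated form with $\lambda^*>0$ and $\norm{Ax'-y^\delta}=\mu$, I would verify directly that $(x',\lambda^*)$ satisfies all KKT conditions; by convexity and Slater, $x'$ is then a (in fact the unique) minimizer of \eqref{eq:pr-pb}. The expected obstacle is the subtle part of $(2)\Rightarrow(3)$: the argument that $\lambda^*$ must be strictly positive, which genuinely uses the strict inequality $\norm{A\hat x-y^\delta}>\mu$ in the hypothesis; everything else is bookkeeping (the algebraic equivalence between the two forms of the stationarity condition, and existence/uniqueness of multipliers for a strictly convex quadratic under Slater).
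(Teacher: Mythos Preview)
Your proof is correct and follows the same approach as the paper: the equivalence $(1)\Leftrightarrow(2)$ by the definition of projection onto a closed convex set, and $(2)\Leftrightarrow(3)$ via the Lagrange multiplier characterization of \eqref{eq:pr-pb}. The paper dispatches the latter by citing a textbook theorem, whereas you have written out the KKT argument (Slater from $\mu>\delta$, stationarity of $\mathcal L$, ruling out $\lambda^*=0$ via $\norm{A\hat x-y^\delta}>\mu$, and complementary slackness) in full; the content is the same.
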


\begin{proof}
  Equivalence between items \ref{it:p1-1} and \ref{it:p1-2} follows
  from the definition of orthogonal projections onto closed convex
  sets.  Equivalence between items~\ref{it:p1-2} and~\ref{it:p1-3a} is
  a classical Lagrange multipliers result (see, e.g.,
  \cite[Theorem~5.15]{TR96}).
\end{proof}

In the next lemma we address the range-relaxed projection problem~\eqref{eq:p-proj};
its proof follows from Lemma~\ref{lem:p1}.
\begin{lemma} \label{lem:p2}
Suppose $\norm{A\hat x-y^\delta}>\delta$ and $0<p<1$. The following assertions are
equivalent
\begin{enumerate}
\item\label{it:p2-1} $x'=\Proj_{\Omega_\mu}(\hat x)$ and
  $\delta\leq \mu\leq p\norm{A\hat x-y^{\delta}}+(1-p)\delta$;
\item\label{it:p2-2} $(x',\mu)\in X\times \R$ is a solution of \eqref{eq:p-proj};
\item\label{it:p2-3a} $x'=\hat x-\lambda(I+\lambda A^*A)^{-1}A^*(A\hat x-y^\delta)$,
  $\lambda>0$,
  \begin{align*}
    \delta\leq \norm{Ax'-y^\delta}
    \leq  p\norm{A\hat x - y^\delta} + (1-p)\delta,
  \end{align*}
  and $\mu=\norm{Ax' - y^\delta}$.
\end{enumerate}
\end{lemma}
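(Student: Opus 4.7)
The plan is to reduce the range-relaxed projection problem to the fixed-level projection problem already treated in Lemma~\ref{lem:p1}. The key structural observation for the equivalence \ref{it:p2-1}~$\Leftrightarrow$~\ref{it:p2-2} is that in \eqref{eq:p-proj} the objective $\norm{x-\hat x}^2$ depends only on $x$ while $\mu$ enters only through the constraint $\norm{Ax-y^\delta}^2\leq\mu^2$. Hence, for any fixed admissible $\mu\in[\delta,\,p\norm{A\hat x-y^\delta}+(1-p)\delta]$, the optimal $x$ is characterized as the minimizer of $\norm{x-\hat x}^2$ over $\Omega_\mu$, which by definition is $\Proj_{\Omega_\mu}(\hat x)$. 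This sets up a bijection between solutions $(x',\mu)$ of \eqref{eq:p-proj} and pairs $(\Proj_{\Omega_\mu}(\hat x),\mu)$ with $\mu$ in the feasible interval.

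For \ref{it:p2-1}~$\Leftrightarrow$~\ref{it:p2-3a}, I would fix any admissible $\mu$. Since $0<p<1$, the upper endpoint $p\norm{A\hat x-y^\delta}+(1-p)\delta$ is strictly less than $\norm{A\hat x-y^\delta}$, so every feasible $\mu$ satisfies $\mu<\norm{A\hat x-y^\delta}$; in particular $\hat x\notin\Omega_\mu$ and the orthogonal projection $x'=\Proj_{\Omega_\mu}(\hat x)$ lies on the boundary of $\Omega_\mu$, forcing $\norm{Ax'-y^\delta}=\mu$. When $\mu>\delta$, Lemma~\ref{lem:p1} directly supplies the Lagrangian representation $x'=\hat x-\lambda(I+\lambda A^*A)^{-1}A^*(A\hat x-y^\delta)$ with $\lambda>0$ and $\norm{Ax'-y^\delta}=\mu$; combining this with the two-sided bound on $\mu$ yields exactly the statement of item~\ref{it:p2-3a}. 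Conversely, item~\ref{it:p2-3a} together with Lemma~\ref{lem:p1} applied at $\mu:=\norm{Ax'-y^\delta}$ identifies $x'$ as $\Proj_{\Omega_\mu}(\hat x)$.

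The main obstacle is the borderline case $\mu=\delta$, which is excluded from the hypothesis of Lemma~\ref{lem:p1} because Slater's constraint qualification may fail there (by Proposition~\ref{pr:omega-mu}, item~4, the set $\Omega_\delta$ need not have non-empty interior). I would dispatch this case by a direct monotonicity argument: the map $\lambda\mapsto\norm{Ax'(\lambda)-y^\delta}$, with $x'(\lambda):=\hat x-\lambda(I+\lambda A^*A)^{-1}A^*(A\hat x-y^\delta)$, is continuous and non-increasing on $[0,\infty)$, equals $\norm{A\hat x-y^\delta}>\delta$ at $\lambda=0$, and is bounded above by $\norm{Ax^\star-y^\delta}\leq\delta$ in the limit $\lambda\to\infty$ (using assumption~(A1) together with \eqref{eq:noisy-data}). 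By the intermediate value theorem there exists $\lambda^*>0$ with $\norm{Ax'(\lambda^*)-y^\delta}=\delta$; the global minimality of $x'(\lambda^*)$ for $\mathcal{L}(\cdot,\lambda^*)$ then identifies it as $\Proj_{\Omega_\delta}(\hat x)$. This closes the boundary case and, combined with the previous paragraph, completes the chain \ref{it:p2-1}~$\Leftrightarrow$~\ref{it:p2-2}~$\Leftrightarrow$~\ref{it:p2-3a}.
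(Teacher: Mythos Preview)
Your approach is the same as the paper's, which disposes of the lemma in one line (``its proof follows from Lemma~\ref{lem:p1}''): you fix an admissible $\mu$, observe that the $x$-part of \eqref{eq:p-proj} is exactly the projection problem onto $\Omega_\mu$, and then invoke Lemma~\ref{lem:p1} to pass between items~\ref{it:p2-1} and~\ref{it:p2-3a}. Your write-up is simply a fleshed-out version of that reduction.

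You go further than the paper by treating the boundary value $\mu=\delta$, which Lemma~\ref{lem:p1} (and hence the paper's one-line proof) does not cover. Your monotonicity/IVT argument for this case is essentially right but has a small technical gap: from $\limsup_{\lambda\to\infty}\norm{Ax'(\lambda)-y^\delta}\le\norm{Ax^\star-y^\delta}\le\delta$ you cannot conclude that the value $\delta$ is \emph{attained} at a finite $\lambda^*$ when $\norm{y-y^\delta}=\delta$ exactly and $y^\delta-y$ happens to lie in $\overline{\mathrm{Range}(A)}^{\perp}$; in that degenerate situation $\norm{Ax'(\lambda)-y^\delta}>\delta$ for every finite $\lambda$ and no Lagrange multiplier exists, so item~\ref{it:p2-3a} cannot hold even though item~\ref{it:p2-1} does. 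This edge case is equally unaddressed by the paper, so it is not a defect of your argument relative to the original; if you want to close it cleanly you can either assume the strict inequality $\norm{y-y^\delta}<\delta$ (harmless in practice) or restrict the stated equivalence to $\mu>\delta$.
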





It follows from Lemma~\ref{lem:p2}
that 
solving the range-relaxed projection problem in \eqref{eq:p-proj} boils down to solving the inequalities
\begin{align}
  \label{eq:rg}
  \begin{aligned}
    &
    \delta\leq\norm{Ax'-y^\delta} \leq 
  p\norm{A\hat x-y^\delta}+(1-p)\delta,
  \\ &\qquad\qquad\text{where }
  x' =\hat x-\lambda(I+\lambda A^*A)^{-1}A^*(A\hat x-y^\delta)
\end{aligned}
\end{align}
and defining
$x=x'$ and $\mu= \norm{Ax'-y^\delta}$.
We use this result to propose an implementable version of
Algorithm~\ref{alg:rrNIT-concept} as follows:

\begin{algorithm}[h!]
\begin{center}
\fbox{\parbox{14.3cm}{
$[1]$ choose an initial guess \ $x_0 \in X$;
\smallskip

$[2]$ choose \ $p \in (0,1)$, \ $\tau > 1$ and set \ $k := 0$;
\smallskip

$[3]$ while \ $\big( \norm{Ax_k^\delta-y^\delta}\, > \, \tau\delta \big)$ \ do
\smallskip

\ \ \ \ $[3.1]$ $k := k + 1$;
\smallskip

\ \ \ \ $[3.2]$ compute\, $\lambda_k$ and $x^\delta_k$ such that

\smallskip

\ \ \ \ \ \ \ \ \ \ \,$x_k^\delta \ = \ x_{k-1}^\delta - \lambda_k \,
        (I + \lambda_k A^*A)^{-1} \, A^*(Ax_{k-1}^\delta - y^\delta)$,\;\;

        \smallskip 


        \ \ \ \ \ \ \ \ \ \ \,$\delta\leq \norm{Ax_k^\delta-y^\delta}
        \leq p\norm{Ax_{k-1}^\delta - y^\delta} + (1-p) \delta$
} }
\end{center} \vskip-0.5cm
\caption{The rrNIT method.} \label{alg:rrNIT-true}
\end{algorithm}

\noindent
As in Algorithm~\ref{alg:rrNIT-concept}, the stopping index in the above algorithm
is defined by the discrepancy principle
\begin{equation} \label{def:k-star}
k^* \ := \ \min \{ k \geq 1 \, ; \ \norm{A x_j - y^\delta} > \tau\delta , \
                   j = 0, \dots, k-1 \ \ \ {\rm and} \ \ \
                   \norm{A x_{k} - y^\delta} \leq \tau\delta \} \, .
\end{equation}
The computational burden of the above algorithm resides in the
computation of step
[3.2], which requires the solution of a linear system whose
corresponding residual shall be in a given range.  In other words,
$\lambda_k$ shall be a solution of \eqref{eq:rg} with
$\hat x=x_{k-1}^\delta$.

The next lemma provides a lower bound for  $\lambda=\lambda_k$
which will be used in the convergence analysis
of Algorithm~\ref{alg:rrNIT-true}, presented in Section~\ref{sec:conv-anal}.
This lower bound can also be used for 
used as initial guess to compute $\lambda_k$.
%




\begin{lemma} \label{lm:lbound}
Under the assumptions of Lemma~\ref{lem:p1},
\begin{align*}
  \lambda^*\geq \dfrac{(\norm{A\hat x-y^\delta}-\mu)\norm{A\hat x-y^\delta}}
  {\norm{A^*(A\hat x-y^\delta)}^2}.
\end{align*}
\end{lemma}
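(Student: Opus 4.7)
The plan is to exploit the explicit Lagrange multiplier characterization from Lemma~\ref{lem:p1} and close the estimate with two applications of Cauchy--Schwarz. Throughout, set $r := A\hat x - y^\delta$ and $r' := Ax' - y^\delta$; by hypothesis $\norm{r'} = \mu$, $\norm{r} > \mu > \delta$, and $\lambda^* > 0$.

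First, I would invoke the first-order condition $\nabla_x \mathcal{L}(x',\lambda^*) = 0$ for the Lagrangian~\eqref{eq:lg}, which reads $x' - \hat x = -\lambda^* A^* r'$. Applying $A$ to both sides yields $r - r' = \lambda^* A A^* r'$, and taking inner products with $r$ gives the exact identity
\[
\langle r, r - r'\rangle \;=\; \lambda^*\,\langle A^* r, A^* r'\rangle.
\]
Thus $\lambda^*$ equals the ratio of these two inner products, and the proof reduces to estimating the numerator from below and the denominator from above.

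For the numerator, Cauchy--Schwarz together with $\norm{r'} = \mu$ gives
\[
\langle r, r - r'\rangle \;=\; \norm{r}^2 - \langle r, r'\rangle \;\geq\; \norm{r}^2 - \norm{r}\,\norm{r'} \;=\; \norm{r}\,(\norm{r} - \mu).
\]
For the denominator, the intertwining identity $A^*(I + \lambda AA^*) = (I + \lambda A^*A)A^*$ combined with the explicit formula for $x'$ gives $A^* r' = (I + \lambda^* A^*A)^{-1} A^* r$; since $(I + \lambda^* A^*A)^{-1}$ is self-adjoint with spectrum in $(0,1]$, one has $\norm{A^* r'} \leq \norm{A^* r}$, whence
\[
\langle A^* r, A^* r'\rangle \;\leq\; \norm{A^* r}\,\norm{A^* r'} \;\leq\; \norm{A^* r}^2.
\]
Dividing the two bounds produces the claimed lower bound; the denominator is strictly positive, since $A^* r = 0$ would force $r' = r$, contradicting $\norm{r'} = \mu < \norm{r}$.

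There is no genuine obstacle: the essential observation is that the KKT condition supplies an \emph{exact} expression for $\lambda^*$ rather than a mere inequality, after which both Cauchy--Schwarz steps are routine. The one mild subtlety---arriving at $\norm{A^* r}^2$ in the denominator instead of the weaker $\norm{A^* r}\,\norm{A^* r'}$---rests on the contraction property $\norm{A^* r'} \leq \norm{A^* r}$ of the resolvent $(I + \lambda^* A^*A)^{-1}$.
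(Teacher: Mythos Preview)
Your proof is correct and follows essentially the same route as the paper's: both establish $(\norm{A\hat x-y^\delta}-\mu)\norm{A\hat x-y^\delta}\leq \lambda^*\norm{A^*(A\hat x-y^\delta)}^2$ by combining the first-order optimality condition with the contraction property of $(I+\lambda^* A^*A)^{-1}$. The only cosmetic difference is that you start from the KKT form $x'-\hat x=-\lambda^* A^* r'$ and bound $\langle r,r'\rangle$ directly via Cauchy--Schwarz, whereas the paper starts from the resolvent expression for $x'-\hat x$ and reaches the same inequality by expanding $\mu^2=\norm{A(x'-\hat x)+b}^2$ together with the triangle inequality $\norm{A(x'-\hat x)}\geq\norm{b}-\mu$; the two derivations of the ``numerator'' bound are algebraically equivalent.
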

\begin{proof}
  To simplify the notation, let
  \[
    z \ := \ \Proj_{\Omega_\mu} \hat x,\qquad b \ := \ A\hat x-y^\delta.
  \]
  From the assumption $\norm{A \hat x - y^\delta} > \mu > \delta$, it
  follows that $\hat x \notin \Omega_\delta$.
  Therefore $\norm{Az-y^\delta} = \mu$, 
  \begin{align}\label{eq:Azx}
    \norm{A(z-\hat x)}\geq\norm{A\hat x-y^\delta}-\norm{Az-y^\delta}
    =\norm{b}-\mu,
  \end{align}
  and
  \[
    \mu^2 \ = \ \norm{A(z - \hat x) + b}^2
    \ = \ \norm{A(z - \hat x)}^2 \, + \, \norm{b}^2
    \, + \, 2 \ipl A(z - \hat x) , \, b \ipr \, .
  \]
  %
  %
%
Direct combination of the above equation with the previous inequality yields
\begin{eqnarray*}
- 2 \big\ipl A(z - \hat x) , \, b \big\ipr
&   =  & \norm{A(z - \hat x)}^2 \, + \, \norm{b}^2 - \mu^2 \\
& \geq & \big( \norm{b} - \mu \big)^2 \, + \, \norm{b}^2 - \mu^2 \\
&   =  & 2 \, \norm{b} \, \big( \norm{b} - \mu \big).
\end{eqnarray*}
%
Therefore, if follows from \eqref{eq:Azx}, Cauchy-Schwartz inequality,
and the definition of $\Proj_{\Omega_\mu}$ that
\begin{eqnarray*}
\big( \norm{b} - \mu \big) \, \norm{b}
& \leq & \big\ipl -(z - \hat x) , \, A^* b \big\ipr \\
&  =   & \big\ipl \lambda^* (I + \lambda^* A^*A)^{-1} A^* b, \, A^* b \big\ipr \\
& \leq & \lambda^* \, \norm{(I + \lambda^* A^*A)^{-1}} \, \norm{A^* b}^2 \\
& \leq & \lambda^* \, \, \norm{A^* b}^2 \, ,
\end{eqnarray*}
proving the lemma.
\end{proof}

\begin{corol} \label{cor:lbd-estim}
Let the sequences $(x_k^\delta)$ and $(\lambda_k)$ be defined by the rrNIT method
(Algorithm~\ref{alg:rrNIT-true}), with $\delta \geq 0$ and $y^\delta \in Y$ as in
\eqref{eq:noisy-data} and let
$\mu_k :=\norm{Ax_k^\delta-y^\delta}$. Then
\begin{equation} \label{eq:lbd-k-estim}
\lambda_k \ \geq \
\frac{\big( \norm{A x_{k-1}^\delta - y^\delta} - \mu_k \big)
            \, \norm{A x_{k-1}^\delta - y^\delta}}
     {\norm{A^* (A x_{k-1}^\delta - y^\delta)}^2} \, , \
     k = 1, \dots, k^*.
\end{equation}
In the exact data case (i.e., $\delta = 0$) the above estimate simplifies to\,
$\lambda_k \, \geq \, (1-p) \, \norm{A}^{-2}$.
\end{corol}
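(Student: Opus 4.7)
The plan is to obtain the general estimate~\eqref{eq:lbd-k-estim} as a direct iteration-by-iteration application of Lemma~\ref{lm:lbound}, and then to derive the simplified exact-data bound from the explicit form of the upper constraint on $\mu_k$ together with the bound $\norm{A^*} = \norm{A}$.

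For each $k\in\{1,\dots,k^*\}$, step $[3.2]$ of Algorithm~\ref{alg:rrNIT-true} produces $(\lambda_k,x_k^\delta)$ which, by Lemma~\ref{lem:p2} applied with $\hat x=x_{k-1}^\delta$ and $\mu=\mu_k$, is characterized as a range-relaxed projection with positive Lagrange multiplier. The while-loop condition guarantees $\norm{Ax_{k-1}^\delta-y^\delta}>\tau\delta\geq\delta$, and the upper feasibility bound $\mu_k\leq p\norm{Ax_{k-1}^\delta-y^\delta}+(1-p)\delta$ combined with $p\in(0,1)$ yields $\mu_k<\norm{Ax_{k-1}^\delta-y^\delta}$. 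Hence the hypotheses of Lemma~\ref{lem:p1}, and therefore of Lemma~\ref{lm:lbound}, are satisfied with $\hat x=x_{k-1}^\delta$, $\mu=\mu_k$ and $\lambda^*=\lambda_k$; the estimate of Lemma~\ref{lm:lbound} then transcribes directly into~\eqref{eq:lbd-k-estim}.

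For the exact-data case $\delta=0$, the feasibility interval collapses to $[0,p\norm{Ax_{k-1}^\delta-y^\delta}]$, so $\mu_k\leq p\norm{Ax_{k-1}^\delta-y^\delta}$ and thus $\norm{Ax_{k-1}^\delta-y^\delta}-\mu_k\geq(1-p)\norm{Ax_{k-1}^\delta-y^\delta}$. I would combine this with the elementary bound $\norm{A^*(Ax_{k-1}^\delta-y^\delta)}^2\leq\norm{A}^2\norm{Ax_{k-1}^\delta-y^\delta}^2$; then the factor $\norm{Ax_{k-1}^\delta-y^\delta}^2$ cancels in~\eqref{eq:lbd-k-estim} (it is strictly positive because $k\leq k^*$ and the while-loop has not yet terminated), leaving $\lambda_k\geq(1-p)\norm{A}^{-2}$.

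I do not anticipate any real obstacle. The only minor point to verify is the strictness $\mu_k<\norm{Ax_{k-1}^\delta-y^\delta}$ needed to invoke Lemma~\ref{lem:p1}, which is a direct consequence of $p\in(0,1)$ together with $\norm{Ax_{k-1}^\delta-y^\delta}>\delta$. The corollary is thus essentially a transcription of Lemma~\ref{lm:lbound} into algorithmic notation, plus one algebraic manipulation exploiting the specific upper bound built into the rrNIT feasibility constraint for the $\delta=0$ reduction.
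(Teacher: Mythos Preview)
Your proposal is correct and follows essentially the same approach as the paper: apply Lemma~\ref{lm:lbound} with $\hat x = x_{k-1}^\delta$, $\mu = \mu_k$, $\lambda^* = \lambda_k$ to obtain~\eqref{eq:lbd-k-estim}, and then for $\delta=0$ combine $\mu_k \le p\norm{Ax_{k-1}-y}$ with $\norm{A^*(Ax_{k-1}-y)} \le \norm{A}\,\norm{Ax_{k-1}-y}$ to reach $\lambda_k \ge (1-p)\norm{A}^{-2}$. The only cosmetic difference is that you route the verification of the hypotheses through Lemma~\ref{lem:p2}, whereas the paper simply cites~\eqref{eq:rg} directly.
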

\begin{proof}
From \eqref{eq:rg} and the definition of $\mu_k$ it follows that
$\delta < \mu_k < \norm{A x_{k-1}^\delta - y^\delta}$. Thus, \eqref{eq:lbd-k-estim}
follows from Lemma~\ref{lm:lbound} with $\hat x = x_{k-1}^\delta$,
$\lambda^* = \lambda_k$ and $\mu = \mu_k$ (in the proof of that lemma
it holds $z = x_k^\delta$).

In the exact data case, it follows from \eqref{eq:lbd-k-estim}, together with Assumption
\textbf{(A2)}, that $\lambda_k \, \geq \, \big( \norm{A x_{k-1} - y} - \mu_k \big)
\norm{A}^{-2} \norm{A x_{k-1} - y}^{-1}$.
Moreover, since $\delta = 0$ we have $\mu_k \leq p \norm{A x_{k-1} - y}$.
Combining these two facts, the second assertion follows.
\end{proof}

\section{Convergence Analysis} \label{sec:conv-anal}

We begin this section by establishing an estimate for the decay of the residual
$\norm{A x_k^\delta - y^\delta}$.

\begin{proposition} \label{prop:decay-residual}
Let $(x_k^\delta)$ be the sequence defined by the rrNIT method (Algorithm~\ref{alg:rrNIT-true}),
with $\delta \geq 0$ and $y^\delta \in Y$ as in \eqref{eq:noisy-data}. Then
\[
\big[ \norm{A x_k^\delta - y^\delta} - \delta \big] \ \leq \  p\,
\big[ \norm{A x_{k-1}^\delta - y^\delta} - \delta \big] \ \leq \ p^k\,
\big[ \norm{A x_0 - y^\delta} - \delta \big] \, , \ k = 1, \dots, k^* \, ,
\]
where $k^* \in \N$ is defined by \eqref{def:k-star}.
\end{proposition}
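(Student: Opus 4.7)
The plan is to extract the two required inequalities directly from the upper bound on the residual imposed in Step [3.2] of Algorithm~\ref{alg:rrNIT-true}, with no further machinery needed beyond the algebra of subtracting $\delta$ and then iterating.

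First I would recall that, by construction, the pair $(\lambda_k, x_k^\delta)$ produced at Step [3.2] satisfies
\[
\norm{A x_k^\delta - y^\delta} \ \leq \ p \, \norm{A x_{k-1}^\delta - y^\delta} + (1-p)\delta, \qquad k = 1, \dots, k^*.
\]
Subtracting $\delta$ from both sides and factoring gives
\[
\norm{A x_k^\delta - y^\delta} - \delta \ \leq \ p \, \norm{A x_{k-1}^\delta - y^\delta} + (1-p)\delta - \delta \ = \ p\bigl[\norm{A x_{k-1}^\delta - y^\delta} - \delta\bigr],
\]
which is the first inequality of the proposition. Note that for $k \leq k^*$ the stopping rule guarantees $\norm{A x_{k-1}^\delta - y^\delta} > \tau \delta \geq \delta$, so the bracketed quantity is nonnegative and multiplying by $p \in (0,1)$ is legitimate.

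For the second inequality I would proceed by a straightforward finite induction on $k$: the base case $k=1$ is exactly the inequality just derived; the inductive step applies the one-step estimate to $\bigl[\norm{A x_{k-1}^\delta - y^\delta} - \delta\bigr]$ and uses the inductive hypothesis that this is bounded by $p^{k-1}\bigl[\norm{A x_0 - y^\delta} - \delta\bigr]$, yielding the factor $p^k$.

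There is no real obstacle here; the statement is essentially a restatement of the algorithmic constraint on the residual. The only point worth being careful about is that the chain of inequalities is meaningful only for indices $k$ with $k \leq k^*$, so that the iterates $x_k^\delta$ are actually produced by the algorithm and the bracketed quantities remain nonnegative, which is exactly the range over which the proposition is asserted.
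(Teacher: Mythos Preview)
Your proof is correct and follows essentially the same approach as the paper: both derive the first inequality directly from the upper bound in Step~[3.2] by subtracting $\delta$, and both obtain the second inequality by iterating the first. The only cosmetic difference is that the paper simply states ``it is enough to verify the first inequality,'' whereas you spell out the finite induction explicitly; your remark about nonnegativity of the bracketed quantities is a harmless extra observation (the chain of inequalities is valid regardless, since multiplying by $p>0$ preserves inequalities irrespective of sign).
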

\begin{proof}
It is enough to verify the first inequality.
Recall that $x_k^\delta \in \Omega_{\mu_k}$, where $\delta \leq \mu_k \leq
p \norm{A x_{k-1}^\delta - y^\delta} + (1-p)\delta$.
Consequently,
$\norm{A x_k^\delta - y^\delta} \leq p\norm{A x_{k-1}^\delta - y^\delta} + (1-p)\delta$,
and the first inequality follows.
\end{proof}

Now we are ready to prove finiteness of $k^*$ and to provide an upper
bound for it, whenever $\delta>0$.


\begin{corol} \label{cor:estim-k*}
Let $(x_k^\delta)$ be the sequence defined by the rrNIT method (Algorithm~\ref{alg:rrNIT-true}),
with $\delta > 0$ and $y^\delta \in Y$ as in \eqref{eq:noisy-data}.
Then the stopping index $k^*$ defined in \eqref{def:k-star} satisfies
\[
k^* \ \leq \ |\ln p|^{-1} \,
             \ln \left[ \frac{\norm{A x_0 - y^\delta}-\delta}{(\tau-1)\delta} \right] + 1 \, .
\]
\end{corol}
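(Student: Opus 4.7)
The plan is to derive the bound by combining the geometric decay established in Proposition \ref{prop:decay-residual} with the definition of $k^*$ in \eqref{def:k-star}. The only edge case to dispose of is $k^* = 1$, for which the claimed upper bound is vacuously satisfied since the right hand side exceeds $1$ (the logarithm is nonnegative because the stopping criterion not being met at step $0$ would force $\norm{Ax_0 - y^\delta} > \tau\delta$, making the argument of the logarithm at least $1$; if the algorithm stops at $k^* = 1$ there is nothing to prove).

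Assuming $k^* \geq 2$, first I would use the definition of $k^*$ to note that the stopping test fails at iteration $k^*-1$, i.e.\ $\norm{A x_{k^*-1}^\delta - y^\delta} > \tau \delta$, hence $\norm{A x_{k^*-1}^\delta - y^\delta} - \delta > (\tau - 1)\delta$. Next, I would invoke Proposition \ref{prop:decay-residual} at index $k^*-1$, which gives $\norm{A x_{k^*-1}^\delta - y^\delta} - \delta \leq p^{k^*-1}\bigl(\norm{A x_0 - y^\delta} - \delta\bigr)$. Chaining these two inequalities yields $(\tau-1)\delta < p^{k^*-1}\bigl(\norm{A x_0 - y^\delta} - \delta\bigr)$, i.e.\ $p^{k^*-1} > (\tau - 1)\delta / (\norm{A x_0 - y^\delta} - \delta)$.

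Finally, I would take logarithms of both sides and divide by $\ln p < 0$, which reverses the inequality, producing $k^* - 1 < |\ln p|^{-1} \ln\bigl[(\norm{A x_0 - y^\delta} - \delta)/((\tau - 1)\delta)\bigr]$, which rearranges to the stated bound on $k^*$. There is no real obstacle here: the only subtlety is keeping track of the direction of the inequality when dividing by $\ln p$, and handling the trivial $k^* = 1$ case separately so that the logarithm's argument is guaranteed to be positive (which, for $k^* \geq 2$, follows from $\norm{A x_0 - y^\delta} > \tau\delta > \delta$).
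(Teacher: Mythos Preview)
Your proposal is correct and follows essentially the same approach as the paper: use the definition of $k^*$ to get $\norm{Ax_{k^*-1}^\delta - y^\delta} > \tau\delta$, combine with the geometric decay from Proposition~\ref{prop:decay-residual} to obtain $(\tau-1)\delta < p^{k^*-1}\bigl(\norm{Ax_0 - y^\delta} - \delta\bigr)$, and take logarithms. The only cosmetic difference is in the treatment of the edge case: the paper simply assumes $\norm{Ax_0 - y^\delta} > \tau\delta$ (otherwise the iteration does not start), which already covers your $k^*=1$ case since the chain of inequalities then holds with $p^0=1$.
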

\begin{proof}
We may assume $\norm{A x_0 - y^\delta} > \tau\delta$.%
\footnote{Otherwise the iteration does not start, i.e.,\, $k^* = 0$.}
From \eqref{def:k-star} follows $\tau\delta < \norm{A x_{k}^\delta - y^\delta}$,
$k=0, \dots\ k^*-1$. This inequality (for $k = k^*-1$), together with Proposition~%
\ref{prop:decay-residual} imply that
\[
(\tau-1)\delta \   <  \ \norm{A x_{k^*-1}^\delta - y^\delta} - \delta
               \ \leq \ p^{k^*-1} \big[ \norm{A x_0 - y^\delta} - \delta \big] \, ,
\]
completing the proof (recall that $p \in (0,1)$).
\end{proof}

Monotonicity of the iteration error $\norm{x^\star - x_k^\delta}$ was already established
in Section~\ref{sec:rrNIT}. In the next proposition we estimate the ``gain''\,
$\norm{x^\star - x_{k-1}^\delta}^2 - \norm{x^\star - x_k^\delta}^2$\, in the rrNIT method.

\begin{proposition} \label{prop:gain-estim}
Let $(x_k^\delta)$ be the sequence defined by the rrNIT method (Algorithm~\ref{alg:rrNIT-true}),
with $\delta > 0$ and $y^\delta \in Y$ as in \eqref{eq:noisy-data}. Then
\begin{equation} \label{eq:gain-estim}
\norm{x^\star - x_{k-1}^\delta}^2 - \norm{x^\star - x_k^\delta}^2
\  =  \ \norm{x_k^\delta - x_{k-1}^\delta}^2
\, + \, \lambda_k \norm{A (x^\star - x_k^\delta)}^2
\, + \, \lambda_k \, \Big[ r(x_k^\delta) - r(x^\star) \Big] \, ,
\end{equation}
for\, $k = 1, \dots, k^*$, where $r(x) := \norm{A x - y^\delta}^2$.\, Consequently,
\begin{equation} \label{eq:gain-estim-le}
\norm{x^\star - x_{k-1}^\delta}^2 - \norm{x^\star - x_k^\delta}^2
\ \geq \ \lambda_k^2 \, \norm{A^* (A x_k^\delta - y^\delta)}^2 \, + \, \lambda_k
         \Big[ \norm{A (x^\star - x_k^\delta)}^2 + (\tau^2 - 1) \, \delta^2 \Big] ,
\end{equation}
for\, $k = 1, \dots, k^*-1$; moreover,
\begin{equation} \label{eq:gain-estim-last}
\norm{x^\star - x_{k^*-1}^\delta}^2 - \norm{x^\star - x_{k^*}^\delta}^2
\ \geq \ \lambda_{k^*}^2 \, \norm{A^* (A x_{k^*}^\delta - y^\delta)}^2 \, + \,
         \lambda_{k^*} \, \norm{A (x^\star - x_{k^*}^\delta)}^2 .
\end{equation}
\end{proposition}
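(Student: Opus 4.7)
The plan is to first rewrite the iteration in its ``implicit'' form, then exploit a standard polarization identity, and finally use the discrepancy/range constraints on $r(x_k^\delta)$ and $r(x^\star)$ to pass from the equality \eqref{eq:gain-estim} to the two inequalities.

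First I would derive the key identity $x_k^\delta - x_{k-1}^\delta = -\lambda_k A^*(A x_k^\delta - y^\delta)$. Starting from the update $x_k^\delta - x_{k-1}^\delta = -\lambda_k w$ with $w = (I+\lambda_k A^*A)^{-1}A^*(Ax_{k-1}^\delta - y^\delta)$, I multiply out $(I+\lambda_k A^*A)w = A^*(Ax_{k-1}^\delta - y^\delta)$ and substitute $\lambda_k A w = A(x_{k-1}^\delta - x_k^\delta)$ to obtain the desired identity (this is simply the first-order optimality condition for the Tikhonov subproblem, evaluated at the minimizer). In particular, $\|x_k^\delta - x_{k-1}^\delta\|^2 = \lambda_k^2 \|A^*(Ax_k^\delta - y^\delta)\|^2$, which will account for the first term on the right-hand side of \eqref{eq:gain-estim-le} and \eqref{eq:gain-estim-last}.

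Next I would apply the polarization identity $\|u\|^2 - \|v\|^2 = \|u-v\|^2 + 2\langle u-v, v\rangle$ with $u = x^\star - x_{k-1}^\delta$ and $v = x^\star - x_k^\delta$, so that $u-v = x_k^\delta - x_{k-1}^\delta$. This yields
\[
\norm{x^\star - x_{k-1}^\delta}^2 - \norm{x^\star - x_k^\delta}^2 = \norm{x_k^\delta - x_{k-1}^\delta}^2 + 2\bracket{x_k^\delta - x_{k-1}^\delta,\, x^\star - x_k^\delta}.
\]
Into the inner product I substitute the identity from the first paragraph, moving $A^*$ across to $A$, which converts the cross term into $-2\lambda_k\bracket{Ax_k^\delta - y^\delta,\, Ax^\star - Ax_k^\delta}$. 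Writing $Ax^\star - Ax_k^\delta = (Ax^\star - y^\delta) - (Ax_k^\delta - y^\delta)$ and expanding, together with the standard identity $2\bracket{a,b} = \|a\|^2 + \|b\|^2 - \|a-b\|^2$, I obtain
\[
-2\lambda_k\bracket{Ax_k^\delta - y^\delta,\, Ax^\star - Ax_k^\delta} = \lambda_k\bigl[r(x_k^\delta) - r(x^\star) + \norm{A(x^\star - x_k^\delta)}^2\bigr],
\]
which is precisely \eqref{eq:gain-estim}.

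Finally, to derive \eqref{eq:gain-estim-le} I use that for $k = 1,\dots,k^*-1$ the iterate $x_k^\delta$ fails the discrepancy test, so $r(x_k^\delta) = \norm{Ax_k^\delta - y^\delta}^2 > \tau^2\delta^2$, while $r(x^\star) = \norm{y - y^\delta}^2 \leq \delta^2$ by \eqref{eq:noisy-data}; hence $r(x_k^\delta) - r(x^\star) \geq (\tau^2-1)\delta^2$. Combined with $\norm{x_k^\delta - x_{k-1}^\delta}^2 = \lambda_k^2\norm{A^*(Ax_k^\delta - y^\delta)}^2$, this gives \eqref{eq:gain-estim-le}. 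For \eqref{eq:gain-estim-last}, at the terminal index $k^*$ the range constraint in Algorithm~\ref{alg:rrNIT-true} still enforces $\norm{Ax_{k^*}^\delta - y^\delta} \geq \delta$, so $r(x_{k^*}^\delta) - r(x^\star) \geq 0$ and the corresponding term can simply be dropped. No serious obstacle is anticipated; the only delicate point is the algebraic rearrangement that produces $r(x_k^\delta) - r(x^\star) + \norm{A(x^\star - x_k^\delta)}^2$ from the cross term, which must be done carefully because $Ax^\star \neq y^\delta$ in the noisy case.
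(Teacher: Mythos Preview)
Your proof is correct. Both your argument and the paper's hinge on the first-order optimality condition $x_k^\delta - x_{k-1}^\delta = -\lambda_k A^*(Ax_k^\delta - y^\delta)$, but you reach the identity \eqref{eq:gain-estim} by a direct polarization-and-expand computation, whereas the paper packages the same information more structurally: it observes that the Lagrangian $\mathcal{L}(\cdot,\lambda_k)$ is quadratic with Hessian $I+\lambda_k A^*A$, so its exact second-order Taylor expansion about the minimizer $x_k^\delta$ (where the linear term vanishes) reads
\[
\mathcal{L}(x,\lambda_k)=\mathcal{L}(x_k^\delta,\lambda_k)+\tfrac12\bracket{x-x_k^\delta,(I+\lambda_k A^*A)(x-x_k^\delta)},
\]
and substituting the definition of $\mathcal{L}$ with $x=x^\star$ yields \eqref{eq:gain-estim} in one line, bypassing the explicit algebraic rearrangement you flagged as ``delicate.'' Your route is more hands-on and has the minor advantage that it makes the identity $\norm{x_k^\delta-x_{k-1}^\delta}^2=\lambda_k^2\norm{A^*(Ax_k^\delta-y^\delta)}^2$ explicit (the paper uses it silently when passing to \eqref{eq:gain-estim-le}); the paper's route is cleaner conceptually and generalizes immediately to any quadratic penalty. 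The derivation of the two inequalities from \eqref{eq:gain-estim} is identical in both arguments.
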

\begin{proof}
First we derive \eqref{eq:gain-estim}. Due to the definition of
$(x_k^\delta,\,\lambda^\delta)$ in Algorithm~\ref{alg:rrNIT-true}, the
Lagrangian $\mathcal{L}$ in \eqref{eq:lg}
(with $\mu = \mu_k$ and $\hat x = x_{k-1}$) satisfies
$$
{\cal L}(x, \lambda_k) \, = \,
{\cal L}(x_k^\delta, \lambda_k) \, + \, {\cal L}_x(x_k^\delta, \lambda_k) (x - x_k^\delta)
\, + \,
\frac 1 2 \, \big\ipl (x-x_k^\delta) ,\, H(\lambda_k) (x-x_k^\delta) \big\ipr ,
$$
where $H(\lambda_k) \,=\, (I + \lambda_k A^* A)$ is the Hessian of
${\cal L}(\cdot,\lambda_k)$ at $x=x_k^\delta$.\, Since ${\cal L}_x(x_k^\delta, \lambda_k) = 0$,
we have
\begin{align*}
  {\cal L}(x, \lambda_k)
  = 
   {\cal L}(x_k^\delta, \lambda_k) \, + \,
  \frac{1}{2}
  \big\ipl (x-x_k^\delta) , \, (I + \lambda_k A^* A) (x-x_k^\delta) \big\ipr,
\end{align*}
that is,
\begin{align*}
\norm{x-x_{k-1}^\delta}^2  +  \lambda_k \big[ r(x) - \mu_k^2 \big]
=
   \norm{x_k^\delta-x_{k-1}^\delta}^2 + \lambda_k \big[ r(x_k^\delta) - \mu_k^2 \big]
   + \norm{x - x_k^\delta}^2  +  \lambda_k \norm{A (x - x_k^\delta)}^2 .  
\end{align*}
Now, choosing $x = x^\star$, one establishes \eqref{eq:gain-estim}.
Inequality \eqref{eq:gain-estim-le}, on the other hand, follows from
\eqref{eq:gain-estim} together with $r(x^\star) \leq \delta^2$\,
and\, $r(x_k^\delta) > \tau^2\delta^2$, for\, $k = 1, \dots, k^*-1$.
Analogously, \eqref{eq:gain-estim-last} follows from \eqref{eq:gain-estim}
together with\, $r(x_{k^*}^\delta) > \delta^2$ (see Algorithm~\ref{alg:rrNIT-true}).
\end{proof}


\begin{corol} \label{cor:gain-estim-exactdata}
In the exact data case, i.e., $\delta = 0$ and $y^\delta = y \in R(A)$,
then \eqref{eq:gain-estim} becomes
\begin{equation*}
\norm{x^\star - x_{k-1}}^2 - \norm{x^\star - x_k}^2 \ = \
\norm{x_k - x_{k-1}}^2 \, + \, 2 \lambda_k \, \norm{A x_k - y}^2 ,
\end{equation*}
from which follows\,
$\sum_{k\geq 1} \, \lambda_k \, \norm{A x_k - y}^2 \, < \, \infty$ \, and \,
$\sum_{k\geq 1} \, \norm{x_k - x_{k-1}}^2 \, < \, \infty$\,
(the last inequality means that the operator describing the rrNIT iteration
is a reasonable wanderer in the sense of \cite{BrPe67}).
\end{corol}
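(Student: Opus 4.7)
The plan is to specialize the identity \eqref{eq:gain-estim} from Proposition~\ref{prop:gain-estim} to the exact data case and then exploit the nonnegativity of its summands in a telescoping argument.

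First I would substitute $\delta = 0$ and $y^\delta = y$ into \eqref{eq:gain-estim}. Under assumption \textbf{(A1)} we have $A x^\star = y$, so the residual $r(x) = \norm{A x - y}^2$ vanishes at $x^\star$ and satisfies $\norm{A(x^\star - x_k)}^2 = \norm{A x_k - y}^2 = r(x_k)$. Plugging these two observations into \eqref{eq:gain-estim} collapses the $\lambda_k \norm{A(x^\star - x_k)}^2$ term and the $\lambda_k[r(x_k) - r(x^\star)]$ term into a single term $2\lambda_k \norm{A x_k - y}^2$, which yields the claimed identity. Note that in the exact data case the stopping criterion from Algorithm~\ref{alg:rrNIT-true} may never trigger, but the per-step identity of Proposition~\ref{prop:gain-estim} is valid for every $k$ for which the iterate is defined, so the identity holds for all $k \geq 1$.

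For the two summability statements, I would telescope the identity over $k = 1, \dots, N$:
\[
\summ_{k=1}^N \Big( \norm{x_k - x_{k-1}}^2 + 2\lambda_k \norm{A x_k - y}^2 \Big)
\ = \ \norm{x^\star - x_0}^2 - \norm{x^\star - x_N}^2
\ \leq \ \norm{x^\star - x_0}^2 ,
\]
where the right-hand bound is independent of $N$. Since both $\norm{x_k - x_{k-1}}^2$ and $\lambda_k \norm{A x_k - y}^2$ are nonnegative (the $\lambda_k$ being positive Lagrange multipliers), each partial sum is monotone in $N$, and letting $N \to \infty$ delivers both $\summ_{k \geq 1} \norm{x_k - x_{k-1}}^2 < \infty$ and $\summ_{k \geq 1} \lambda_k \norm{A x_k - y}^2 < \infty$ simultaneously.

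There is no substantive obstacle here: the argument is purely algebraic plus a telescoping bound. The only mildly delicate point is recognizing that in the exact data setting one should read Proposition~\ref{prop:gain-estim} for each fixed $k$ (rather than over the range $1, \dots, k^*$, which may be vacuous when $\delta = 0$), which is immediate from inspection of its proof.
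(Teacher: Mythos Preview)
Your proposal is correct and follows exactly the approach the paper intends: the corollary is stated without proof because it is an immediate specialization of \eqref{eq:gain-estim} (using $r(x^\star)=0$ and $\norm{A(x^\star-x_k)}^2=r(x_k)$) followed by the obvious telescoping bound. Your remark that the derivation of \eqref{eq:gain-estim} in Proposition~\ref{prop:gain-estim} does not actually use the hypothesis $\delta>0$ is apt and handles the only potential subtlety.
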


Next we prove strong convergence of the rrNIT method (in the exact data case)
to a solution of the inverse problem \eqref{eq:ip}. The estimate in
Lemma~\ref{lm:lbound} plays a key role in this proof.


\begin{theorem} \label{theor:converg}
Let $(x_k)$ and $(\lambda_k)$ be the sequences defined by the rrNIT method
(Algorithm~\ref{alg:rrNIT-true}), with $\delta = 0$ and $y^\delta = y \in R(A)$.
Then $(x_k)$ converges strongly to some $x^* \in X$. Moreover, $A x^* = y$.
\end{theorem}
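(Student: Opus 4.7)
The plan is to derive a closed-form spectral representation of the iteration error and then invoke dominated convergence on the resulting scalar filter.

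First, I would fix an arbitrary solution $x^\star$ of $Ax = y$ and track the error $e_k := x_k - x^\star$. Since $A x^\star = y$, one has $A^*(A x_{k-1} - y) = A^*A\, e_{k-1}$, and the algebraic identity $I - \lambda_k(I+\lambda_k A^*A)^{-1}A^*A = (I+\lambda_k A^*A)^{-1}$ applied to the rrNIT update (Algorithm~\ref{alg:rrNIT-true}, step [3.2]) yields
\[
e_k \ = \ (I+\lambda_k A^*A)^{-1}\, e_{k-1}, \qquad \text{so that} \qquad e_k \ = \ \Phi_k(A^*A)\,(x_0 - x^\star),
\]
where $\Phi_k(\sigma) := \prod_{j=1}^k (1+\lambda_j\sigma)^{-1}$.

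Second, by the exact-data case of Corollary~\ref{cor:lbd-estim}, $\lambda_j \geq c := (1-p)\norm{A}^{-2} > 0$ for every $j$. Consequently $\Phi_k(\sigma) \leq (1+c\sigma)^{-k} \to 0$ for every $\sigma > 0$, while $\Phi_k(0) = 1$ for all $k$ and $0 \leq \Phi_k \leq 1$ throughout the spectrum $[0,\norm{A}^2]$ of $A^*A$.

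Third, the spectral theorem for the bounded self-adjoint operator $A^*A$, together with the dominated convergence theorem applied to the finite spectral measure of $x_0 - x^\star$ (dominating function $1$), lifts the pointwise limit $\mathbf{1}_{\{0\}}$ of the $\Phi_k$'s to strong operator convergence: $\Phi_k(A^*A)(x_0-x^\star) \to P(x_0-x^\star)$, where $P$ is the orthogonal projection onto $N(A^*A) = N(A)$. Setting $x^* := x^\star + P(x_0 - x^\star)$, one concludes that $x_k \to x^*$ strongly in $X$. A direct computation gives $A x^* = A x^\star + A P(x_0 - x^\star) = y + 0 = y$, and $x^*$ is independent of the choice of $x^\star$ since any two solutions of $Ax=y$ differ by an element of $N(A)$ on which $P$ is the identity.

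The main obstacle is the spectral-DCT step: the uniform lower bound $\lambda_j \geq c > 0$ supplied by Lemma~\ref{lm:lbound} and Corollary~\ref{cor:lbd-estim} is essential, since it forces $\sum_j \lambda_j = \infty$ and hence the pointwise decay $\Phi_k(\sigma) \to 0$ for every $\sigma > 0$. Without such a bound, the filter $\Phi_k$ could fail to vanish on part of the continuous spectrum of $A^*A$ accumulating at zero, and one would recover only weak convergence rather than the strong convergence asserted in the theorem.
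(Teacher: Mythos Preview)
Your argument is correct. Both approaches hinge on the same input, namely the uniform lower bound $\lambda_k \geq (1-p)\norm{A}^{-2}$ from Corollary~\ref{cor:lbd-estim}, which forces $\sum_k \lambda_k = \infty$. The paper stops there and invokes Brill--Schock \cite[Theorem~1.4]{BS87}, which asserts that the NIT iteration converges to a solution of $Ax=y$ precisely when $\sum_k\lambda_k=\infty$. You instead reproduce the mechanism behind that result: the filter representation $e_k=\Phi_k(A^*A)e_0$, pointwise decay of $\Phi_k(\sigma)$ for $\sigma>0$ (driven by the lower bound on $\lambda_k$), and dominated convergence in the spectral measure. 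So the two routes are the same in substance; yours is self-contained rather than a citation, and it additionally identifies the limit explicitly as $x^*=x^\star+P_{N(A)}(x_0-x^\star)$, the solution of $Ax=y$ nearest to $x_0$, which the paper's proof does not record.
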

\begin{proof}
From the second assertion in Corollary~\ref{cor:lbd-estim} it follows that\,
$\sum_{k\geq 1}\, \lambda_k \, = \, \infty$.
The proof now follows from \cite[Theor.\,1.4]{BS87}.
%
\end{proof}

\section{Numerical Implementation} \label{sec:num-implementation}

In this section the implementation of Algorithm~\ref{alg:rrNIT-true} is reviewed.
We discuss the implementation of step [3.2] of that algorithm by means of a Newton-like
method, and how we accelerated this computation.

As discussed in Section~\ref{ssec:true-alg}, at step [3] of Algorithm~\ref{alg:rrNIT-true},
$\lambda_k \geq 0$ is to be obtained as a solution of the scalar rational inequalities
(or a inclusion)
\eqref{eq:rg} with $\hat x=x_{k-1}^\delta$, that is,
\begin{align}
  \label{eq:rg-k}
  \begin{aligned}
    &
    \delta\leq\norm{A_kx_k^\delta-y^\delta} \leq 
    p\norm{Ax_{k-1}^\delta-y^\delta}+(1-p)\delta,
  \\ &\qquad\qquad\text{where }
  x_k^\delta =x_{k-1}^\delta-\lambda(I+\lambda A^*A)^{-1}A^*(Ax_k^\delta-y^\delta)
  \text{ with }\lambda>0.
\end{aligned}
\end{align}
Define, at iteration $k$, $\pi_k(\lambda)$ as the candidate for
$x_k^\delta$ obtained from $x_{k-1}^\delta$ with the Lagrange
multiplier $\lambda$ and let $G_k(\lambda)$ be the square residual at
that point, that is,
\begin{subequations}
\begin{align}
\label{eq:G-pi1}
  \pi_k(\lambda) & :=
                   x_{k-1}^\delta-
                   \lambda(I+\lambda A^*A)^{-1}A^*(A x_{k-1}^\delta-y^\delta),
  \\
   & :=
     (\lambda^{-1}I+ A^*A)^{-1}(\lambda^{-1} x_{k-1}+A^*y^\delta),
  \\
  \label{eq:G-pi2}
 G_k(\lambda) & := \norm{A\pi_k(\lambda)-y^\delta}^2 .
\end{align}
\end{subequations}
With this notation, \eqref{eq:rg-k} writes
\begin{align}
  \label{eq:rat-ineq}
  \delta^2\leq  G_k(\lambda)\leq (p\norm{Ax_{k-1}^\delta-y^{\delta}}+(1-p)\delta)^2
  \quad\text{ with }\lambda>0.
\end{align}

As earlier mentioned, the set of feasible Lagrange multipliers is a
non-degenerate interval in each iteration of the rrNIT method.

\begin{proposition}
  \label{pr:ndi}
  Suppose Algorithm~\ref{alg:rrNIT-true} reaches iteration $k$ and
  $\norm{Ax_k^\delta-y^\delta} > \tau\delta$. Then, for $\lambda \geq 0$
  \begin{align*}
    \dfrac{d}{d\,\lambda}G_k(\lambda)
    &= -2 \left\langle A^*( A\pi_k(\lambda)-y^\delta) ,
      (I+\lambda A^*A)^{-1}A^*(A\pi_k(\lambda)-y^\delta) \right\rangle\\
    &= -2\lambda^{-3} \left\langle x_{k-1}-\pi_k(\lambda) ,
      (\lambda^{-1}I+A^*A)^{-1}(x_{k-1}-\pi_k(\lambda)) \right\rangle.
  \end{align*}
  Moreover, $G_k$ is strictly decreasing in $(0,\infty)$,
   the solution set of~\eqref{eq:rat-ineq} is
  $[\lambda_{\min},\lambda_{\max}]\cap\R$ where
  \begin{align*}
    \lambda_{\min}
    &:=\min\set{\lambda>0\,:\,G_k\leq
      (p\norm{Ax_{k-1}^\delta-y^{\delta}}+(1-p)\delta)^2}\\
    \lambda_{\max}&:=\sup\set{\lambda>0\,:\,\delta^2\leq G_k(\lambda)},
  \end{align*}
  and $0<\lambda_{\min}<\lambda_{\max}\leq\infty$.
\end{proposition}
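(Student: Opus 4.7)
The plan is to verify the two derivative expressions by direct computation, deduce strict monotonicity as an immediate corollary, and then use continuity together with the intermediate value theorem to characterize the solution set. For the derivatives, I would first compute $\pi_k'(\lambda)$ using $\frac{d}{d\lambda}(I+\lambda A^*A)^{-1} = -(I+\lambda A^*A)^{-1} A^*A (I+\lambda A^*A)^{-1}$, the product rule, and the algebraic simplification $I - \lambda A^*A(I+\lambda A^*A)^{-1} = (I+\lambda A^*A)^{-1}$; this yields $\pi_k'(\lambda) = -(I+\lambda A^*A)^{-2} A^*(Ax_{k-1}^\delta - y^\delta)$, and the chain rule gives $G_k'(\lambda) = 2\ipl A^*(A\pi_k(\lambda) - y^\delta), \pi_k'(\lambda)\ipr$. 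The bridge to the stated forms is the identity
\begin{align*}
A^*(A\pi_k(\lambda) - y^\delta) \ = \ \lambda^{-1}(x_{k-1}^\delta - \pi_k(\lambda)),
\end{align*}
obtained by rewriting the defining relation of $\pi_k$ and regrouping $A^*A$ terms; equivalently, $(I+\lambda A^*A) A^*(A\pi_k(\lambda) - y^\delta) = A^*(Ax_{k-1}^\delta - y^\delta)$. Substituting this equivalent form into the expression for $G_k'$ produces the first stated formula; substituting the identity once more together with $(I+\lambda A^*A)^{-1} = \lambda^{-1}(\lambda^{-1}I + A^*A)^{-1}$ gives the second.

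Strict monotonicity then follows from either formula since $(I+\lambda A^*A)^{-1}$ is positive definite: $G_k'(\lambda) \leq 0$, with equality iff $A^*(A\pi_k(\lambda) - y^\delta) = 0$. By the key identity this forces $\pi_k(\lambda) = x_{k-1}^\delta$, and the defining relation for $\pi_k$ combined with $\lambda > 0$ then forces $A^*(Ax_{k-1}^\delta - y^\delta) = 0$. This would make $x_{k-1}^\delta$ a minimizer of $x \mapsto \norm{Ax - y^\delta}^2$; evaluating at $x^\star$ (using \textbf{(A1)} and $\norm{y - y^\delta} \leq \delta$) would then give $\norm{Ax_{k-1}^\delta - y^\delta} \leq \delta$, contradicting the standing hypothesis that this residual exceeds $\tau\delta$.

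For the solution set, put $U := (p\norm{Ax_{k-1}^\delta - y^\delta} + (1-p)\delta)^2$. Continuity of $\pi_k$ at $\lambda = 0^+$ yields $\lim_{\lambda\to 0^+} G_k(\lambda) = \norm{Ax_{k-1}^\delta - y^\delta}^2$, while the Tikhonov minimality of $\pi_k(\lambda)$, tested against $x^\star$, gives $G_k(\lambda) \leq \delta^2 + \lambda^{-1}\norm{x^\star - x_{k-1}^\delta}^2$, so $\lim_{\lambda\to\infty} G_k(\lambda) \leq \delta^2$. The hypothesis $\norm{Ax_{k-1}^\delta - y^\delta} > \delta$ (with $p \in (0,1)$) gives $\delta^2 < U < \norm{Ax_{k-1}^\delta - y^\delta}^2$. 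Strict monotonicity, continuity, and the intermediate value theorem then produce a unique $\lambda_{\min} \in (0,\infty)$ with $G_k(\lambda_{\min}) = U$, whence $\{\lambda > 0 : G_k(\lambda) \leq U\} = [\lambda_{\min}, \infty)$; likewise $\{\lambda > 0 : G_k(\lambda) \geq \delta^2\} = (0, \lambda_{\max}]$ with $\lambda_{\max} \in (0, \infty]$. Their intersection yields the claimed solution set, and $\lambda_{\min} < \lambda_{\max}$ follows from $G_k(\lambda_{\min}) = U > \delta^2$. The main obstacle I expect is isolating the key identity $A^*(A\pi_k(\lambda) - y^\delta) = \lambda^{-1}(x_{k-1}^\delta - \pi_k(\lambda))$ that converts the direct differentiation into the stated forms; once this is at hand, monotonicity and the interval characterization follow routinely.
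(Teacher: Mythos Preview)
The paper states Proposition~\ref{pr:ndi} without proof; there is no argument in the text to compare against. Your proposal is correct and supplies exactly the standard verification one would expect: direct differentiation of $\pi_k$ via the resolvent identity, the optimality relation $\lambda A^*(A\pi_k(\lambda)-y^\delta)+(\pi_k(\lambda)-x_{k-1}^\delta)=0$ (which is precisely $\nabla_x\mathcal{L}(\pi_k(\lambda),\lambda)=0$ from \eqref{eq:lg}) to pass between the two stated forms of $G_k'$, positive definiteness of the resolvent for strict monotonicity, and continuity plus the intermediate value theorem for the interval characterization. One minor remark: the hypothesis you invoke for strictness and for $U<\norm{Ax_{k-1}^\delta-y^\delta}^2$ is $\norm{Ax_{k-1}^\delta-y^\delta}>\tau\delta$, which is guaranteed by the phrase ``Algorithm~\ref{alg:rrNIT-true} reaches iteration $k$'' (the while condition held at the previous index); the additional clause $\norm{Ax_k^\delta-y^\delta}>\tau\delta$ in the statement appears to be either redundant or a misprint for $k-1$, and your argument does not actually rely on it.
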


We will solve \eqref{eq:rat-ineq} by means of a Newton-type method.
%
Newton's method for solving \eqref{eq:rat-ineq},
would be to take some $\lambda_{k,0}>$ and to iterate
\begin{align*}
  \lambda_{k,j+1} \ = \ \lambda_{k,j}-\dfrac{G_k(\lambda_{k,j})-\delta^2}
                        {G'_k(\lambda_{k,j})}
\end{align*}
as long as \eqref{eq:rat-ineq} is not satisfied; when \eqref{eq:rat-ineq}
is satisfied, the last $\lambda_{k,j}$ is used as $\lambda_k$. 
We will introduce a number of modifications in this iteration to accelerate it:
\begin{enumerate}
\item[(M1)] A ``greedy'' version of Newton's method will be used, aiming at
  $G_k(\lambda)=0$, that is, the numerator on the above fraction
  will be $G_k(\lambda_{k,j})$.
\item[(M2)] Newton's step will be dynamically over-relaxed by a factor $w_j$,
  as described below.
\item[(M3)] We choose $\lambda_{k,0}$ using Lemma~\ref{lm:lbound} for $k=0$ and
  information gathered at previous iterations of $k\geq 2$, ad
  described below.
\end{enumerate}

\noindent  Regarding modifications (M1) and (M2), while $\lambda=\lambda_{k,j}$ does not
satisfy~\eqref{eq:rat-ineq}, we use the iteration
\begin{equation} \label{eq:newton-step-or}
\lambda_{k,j+1} \ := \ \lambda_{k,j}-\omega_j\dfrac{G_k(\lambda_{k,j})}
                       {G'_k(\lambda_{k,j})} \, ,
\end{equation}
where  the over-relaxation factor $\omega_j$ is chosen as follows: \\
--- for $j=0$, $\omega_0 = 1$; \\
--- for $j \geq 1$, after computing $\lambda_{k,j}$,  $\pi_k(\lambda_{k,j})$,
and
    $G_k(\lambda_{k,j})$,\\
\mbox{\ \ \ \ \ \ \ \ }
{\bf if} \ \,$G_k(\lambda_{k,j-1})>2 \big( p\norm{A x_{k-1}^\delta - y^\delta} + (1-p)\delta \big)^2$ \\
\mbox{\ \ \ \ \ \ \ \ \ \ \ \ }
     {\bf then} \  $\omega_{j} \, = \, 2\, \omega_{j-1}$ \\
     \mbox{\ \ \ \ \ \ \ \ \ \ \ \ } {\bf else} \ \
     $\omega_{j} \, = \, 1$

\noindent  Regarding modification (M3):
     \\
     --- for $k=1$, $\lambda_{1,0}$ is the lower bound provided by Lemma~\ref{lm:lbound}; \\
     --- for $k =2$, $\lambda_{2,0}=\lambda_1$,
     \\
     --- for $k \geq 3$,
we use a linear extrapolation on $\log$ $\lambda$ from the two previous iterates
as starting point, that is,  $\lambda_{k,0} = {\lambda_{k-1}^2} / {\lambda_{k-2}}$.

The acceleration effect caused by modifications (M1), (M2) and (M3)
is illustrated in Example~\ref{ex:acceleration} below.

Over-relaxation is a well established technique for accelerating iterative
methods for solving linear and non-linear equations, the SOR method being
a classical example.
This {\color{black} fact} motivated the introduction of over-relaxation as in (M2).

In our numerical experiments, we observed that the sequence $\lambda_k$
increases exponentially. This fact motivated the use of (liner) extrapolation
(in the $\log$) for its initial value from iteration 3 on (modification (M3)).

It is worth noticing that, in step 3, each ``inner iteration'' \eqref{eq:newton-step-or}
requires the solution of either two linear systems per failed inner iteration or one
linear system at the successful last inner iteration.
On the other hand, in the gNIT method the computation of $\lambda_k = q^k$
(for some {\em a priori} chosen $q > 1$) is straightforward. Consequently,
one needs to solve one linear system (modeled by $(I + q^k A^*A)$) in each
step of the gNIT method.
This facts motivated us to use the accumulated number of linear system to measure
the performance of the different NIT method by plotting the residual and the error 
as a function of this quantity (see Figure~\ref{fig:rrNIT-acceleration1}).

\begin{example} \label{ex:acceleration}
The benchmark problem presented in Example~\ref{ex:gNIT-sIT} is revisited and
solved by the rrNIT method using the Newton-method \eqref{eq:newton-step-or}
with combinations of modifications (M1)-(M3):
In Figure~\ref{fig:rrNIT-acceleration1} this inverse problem is solved using:
\\ (BLUE) 
modification (M1);
\\ (PINK) 
modifications (M1), (M2);
\\ (RED) 
modifications (M1), (M2), (M3);
\\
Notice that, in Figure~\ref{fig:rrNIT-acceleration1}, the x-axis denotes the
accumulated number of linear systems. This choice allows an better comparison
of the efficiency of the different rrNIT implementations.
\end{example}

\begin{figure}[t]
\centerline{\includegraphics[width=\textwidth]{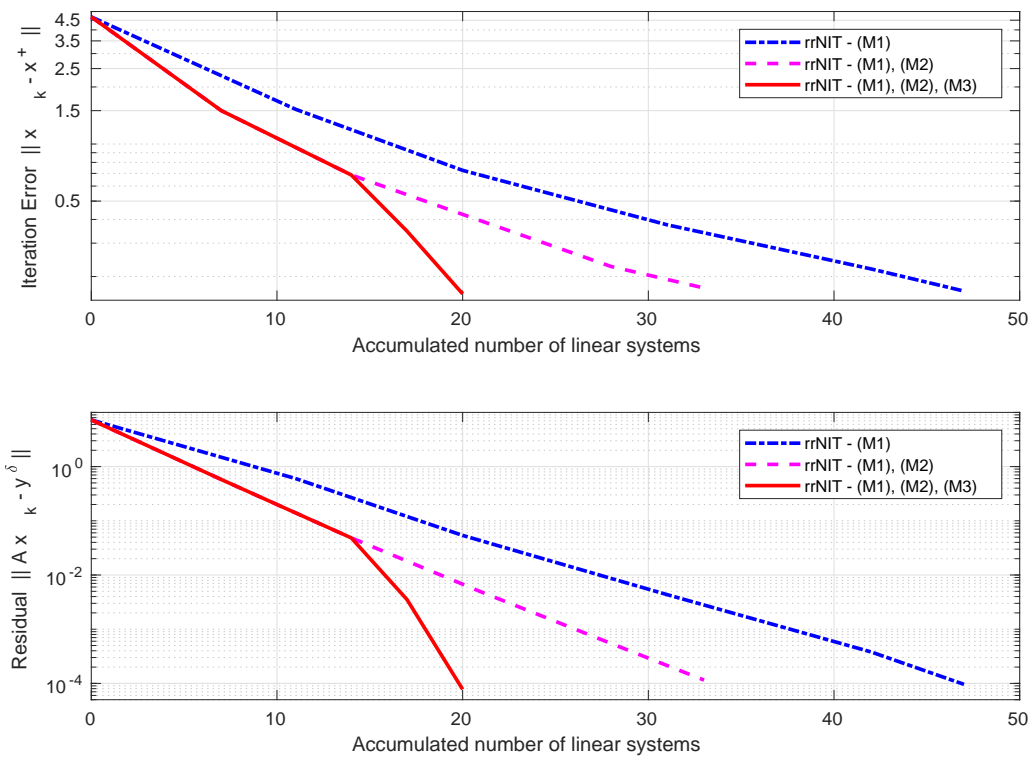}}
\vskip-0.5cm
\caption{\small Implementation of rrNIT method: acceleration caused
by modifications (M1), (M2), (M3) to iteration \eqref{eq:newton-step-or}.}
\label{fig:rrNIT-acceleration1}
\end{figure}

Figure~\ref{fig:rrNIT-acceleration1} illustrates that the \emph{cumulative}
effect of modifications (M1), (M2), and (M3) is to accelerate the computation
of $\lambda_k$ as required in step [3] of Algorithm~\ref{alg:rrNIT-true}.

A pseudo-code version of our implementation of Algorithm~\ref{alg:rrNIT-true}
with the strategies above discussed is presented in Appendix~A for
the sake of completeness.

\section{Numerical experiments} \label{sec:num-experiments}

In this section, Algorithm~\ref{alg:rrNIT-numer} (see Appendix~A)
{\color{black} is implemented} for solving two well known linear ill-posed problems.
In Section~\ref{ssec:num-id} the {\em Image Deblurring Problem}
\cite{BeBo98, Ber09} is considered, while in Section~\ref{ssec:num-ipp}
we address the {\em Inverse Potential Problem} \cite{HeRu96, Isa06} in 2D,
which is an elliptic parameter identification problem.

In both Sections~\ref{ssec:num-id} and~\ref{ssec:num-ipp}, the performance
of Algorithm~\ref{alg:rrNIT-numer} is compared to the performance of two well
established methods: The gNIT method with $\lambda_k = 2^k$; the NIT method
proposed in \cite{DoHa13}.

\subsection{Image deblurring problem} \label{ssec:num-id}

Image deblurring problems \cite{BeBo98}
are finite dimensional problems modeled, in general, by high dimensional linear
systems of the form \eqref{eq:ip}.
In this setting, 
$x \in X = \R^n$ represents the pixel values of an unknown true image, while
$y \in Y = X$ contains the pixel values of the observed (blurred) image.
In practice, only noisy blurred data $y^\delta \in Y$ satisfying \eqref{eq:noisy-data}
is available.

The matrix $A$ describes the blurring phenomenon \cite{Ber09, BeBo98}. We consider the simple
situation where the blur of the image is modeled by a space invariant point spread function
(PSF). In the continuous model, the blurring process is represented by an integral
operator of convolution type and \eqref{eq:ip} corresponds to an integral equation of
the first kind \cite{EngHanNeu96}.
In our discrete setting, after incorporating appropriate boundary conditions into the model,
the discrete convolution is evaluated by means of the FFT algorithm.
We added to the exact data (the convoluted image) a normally distributed noise with zero mean
and suitable variance for achieving a prescribed relative noise level .

The computation of our deblurring experiment was conducted using MATLAB~2012a.
The corresponding setup is shown in Figure~\ref{fig:ID-setup}:
(a) True image $x \in \R^n$, $n = 256^2$ (Cameraman $256\times256$);
(b) PSF is the rotationally symmetric Gaussian low-pass filter of size [257 257] and
standard deviation $\sigma = 4$ (command {\tt fspecial('gaussian',\,[257 257],\,4.0)});
(c) Exact data $y = Ax \in \R^n$ (blurred image).
The noise was generated used the {\tt randn} routine while the FFT was computed
using the {\tt fft2} routine.

\begin{figure}[t]
\centerline{\includegraphics[width=0.35\textwidth]{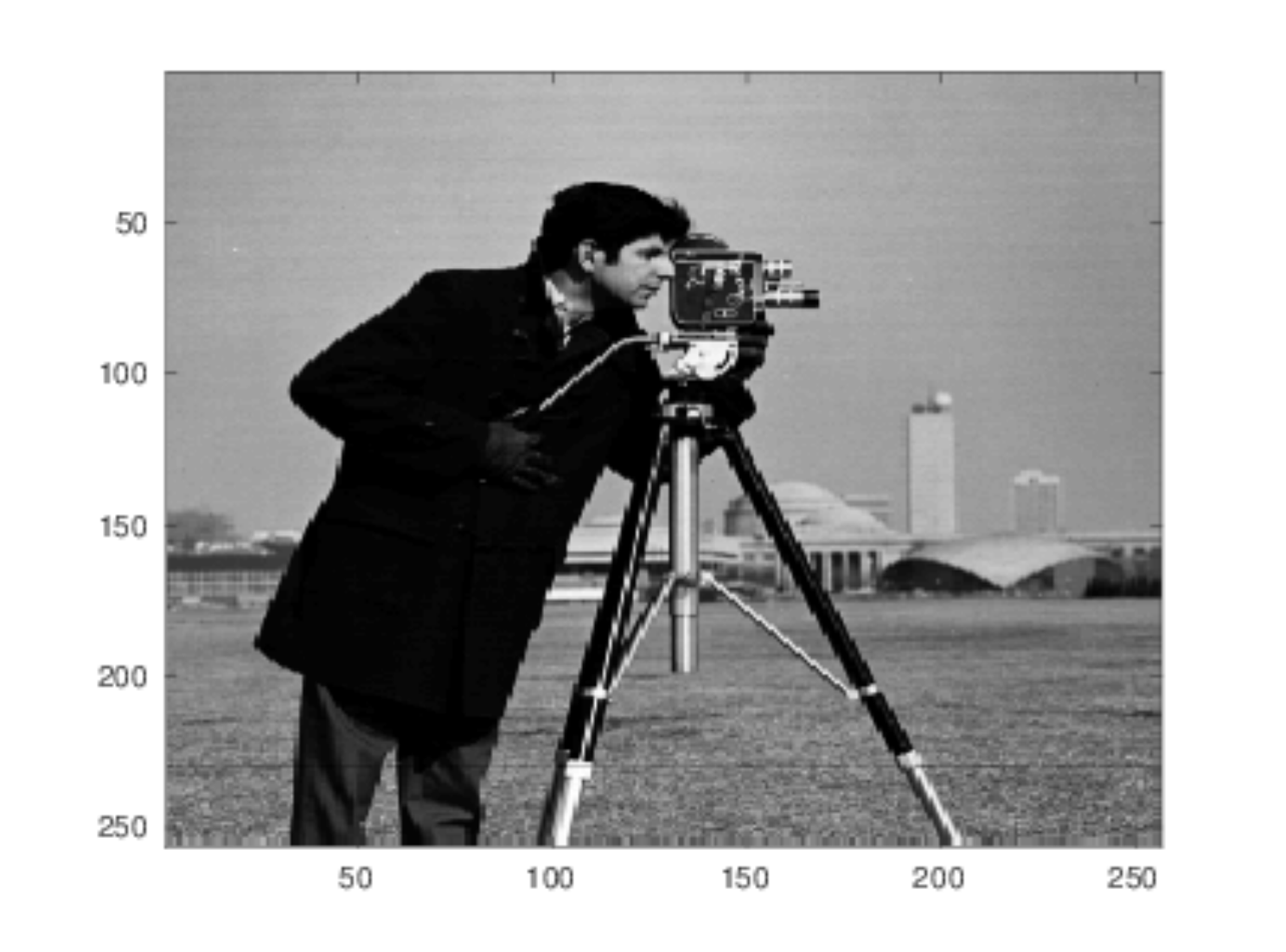}
            \includegraphics[width=0.37\textwidth]{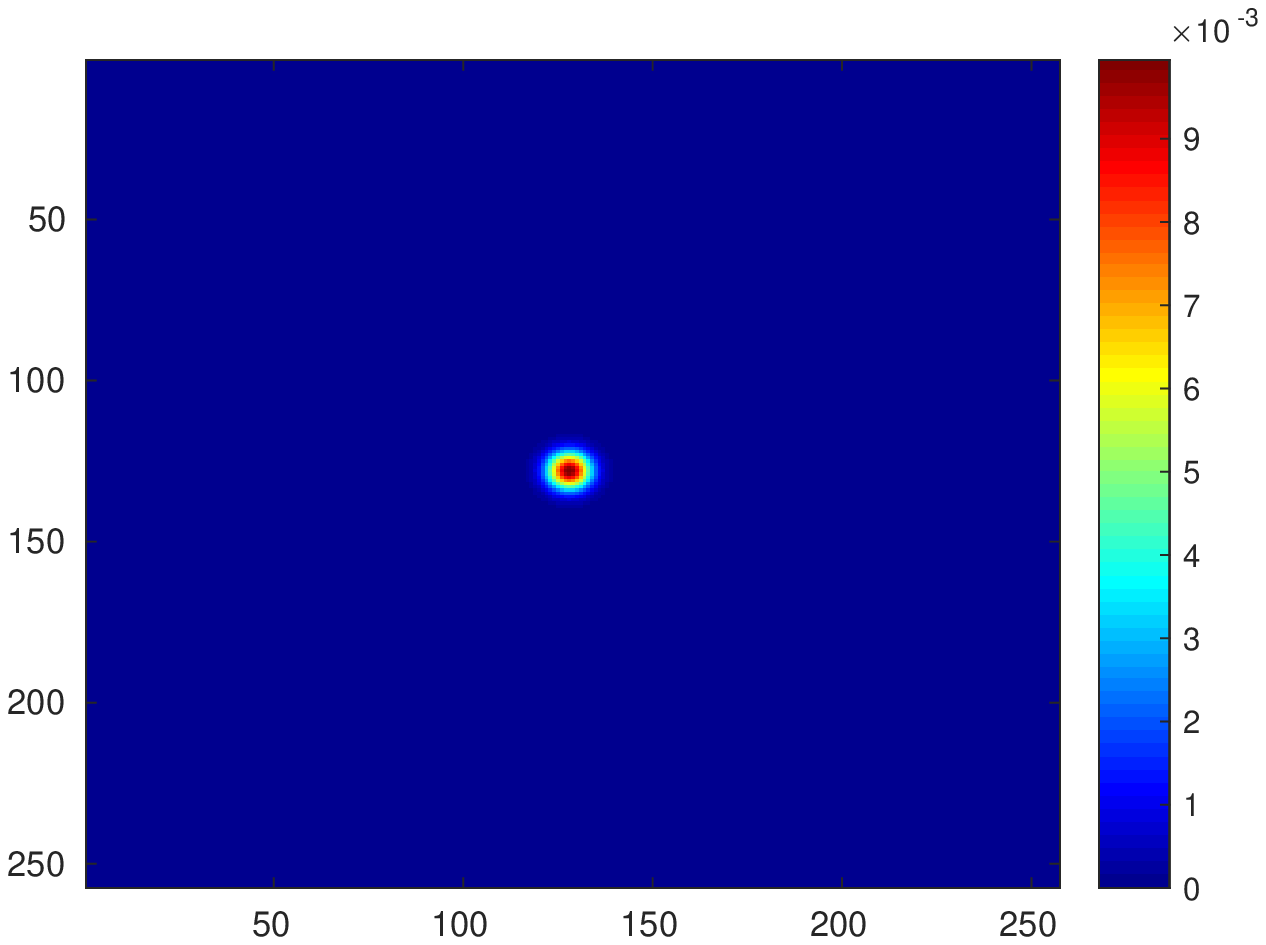}
            \includegraphics[width=0.35\textwidth]{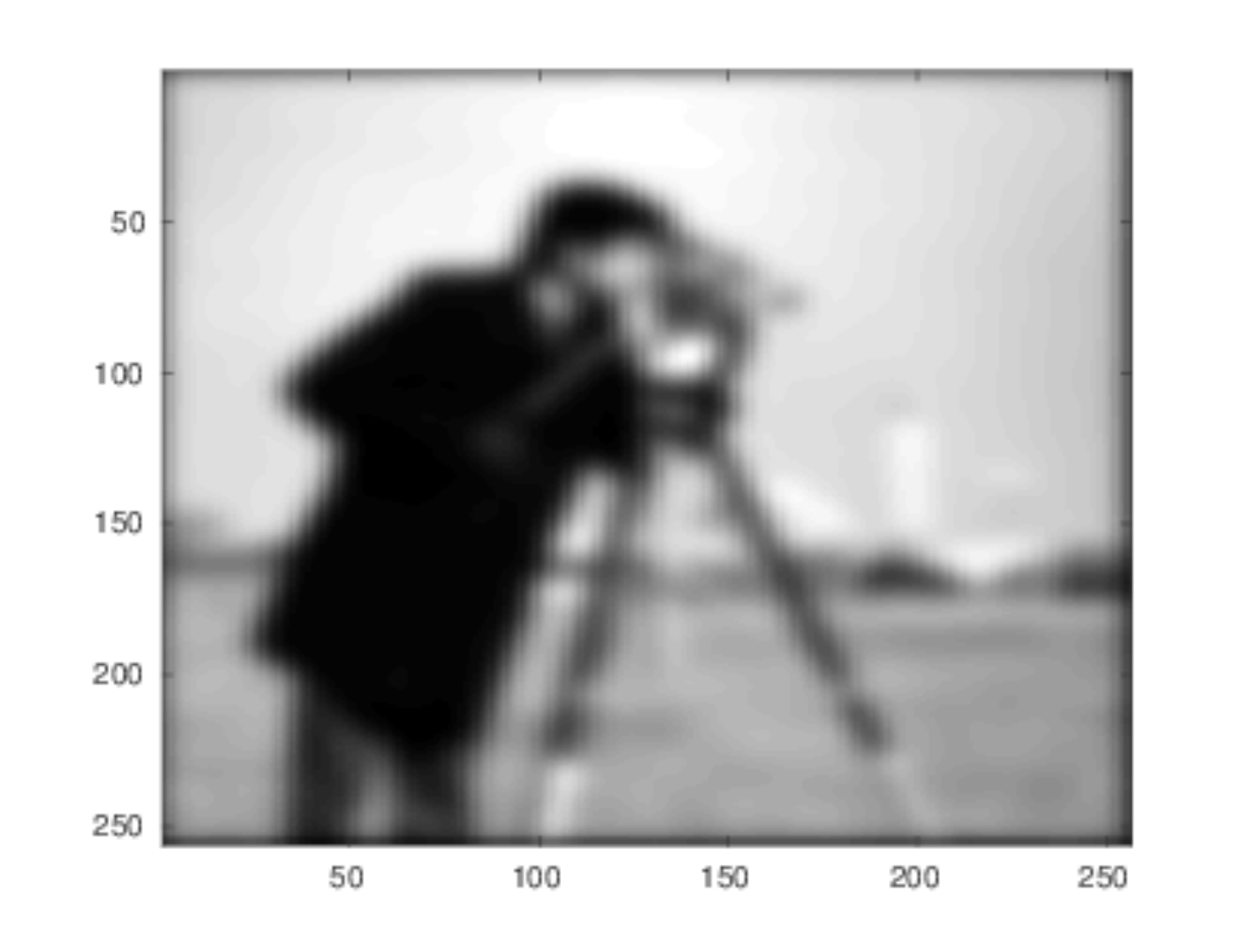} } \vskip0.3cm
\vskip-0.5cm \centerline{\hfill (a) \hspace{5cm} (b) \hspace{5cm} (c) \hfill}
\caption{\small Image deblurring problem: setup of the inverse problem. \
(a) Original image $x$; \ (b) Point spread function; \ (c) Blurred image $y$.}
\label{fig:ID-setup}
\end{figure}

Three distinct scenarios are considered, where the relative noise level
$\norm{y-y^\delta} / \norm{y}$ corresponds to $10^{-1} \%$, $10^{-3} \%$ and
$10^{-6}\%$ respectively (in the third scenario, the choice of the noise level
is motivated by MATLAB's double-precision accuracy).

In Figure~\ref{fig:ID-numerics} the following methods are compared for the third
scenario:
(BLACK) gNIT with $\lambda_k = 2^k$;
(RED) rrNIT method in Algorithm~\ref{alg:rrNIT-numer} (with $p = 0.2$);
(BLUE) Hanke-Donatelli NIT method in \cite{DoHa13}.
The pictures in Figure~\ref{fig:ID-numerics} show:
(TOP) relative error $\norm{x^\star - x_k^\delta} / \norm{x^\star}$;
(BOTTOM) residual $\norm{A x_k^\delta - y^\delta}$.
The x-axis in the these pictures is scaled by the accumulated number of linear systems solved.
This choice allows an easier comparison between the efficiency of the different methods.

All methods are stopped according to the discrepancy principle with $\tau = 3$.
As initial guess we choose $x_0 = y^\delta$ (the noisy blurred image).

The numerical results concerning all three scenarios are summarized in Table~\ref{tab:id}.
In this table we show, for each scenario, the total number of linear systems solved, as
well as the number of iterations needed to reach the stop criteria.

\begin{table}[h]
\begin{center}
\begin{tabular}{c  c c c}
\hline
     {$\delta$}    &    gNIT   & NIT in \cite{DoHa13} &   rrNIT   \\
\hline
$10^{-1} \%$ \ \ \ & \ 6 (\ 6) &        15 (\ 5)      & \ 7 (\ 4) \\
$10^{-3} \%$ \ \ \ &  17 (17)  &        23 (\ 7)      &  11 (\ 7) \\
$10^{-6} \%$ \ \ \ &  36 (36)  &        43 (11)       &  16 (11)  \\
\end{tabular}
\end{center}
\caption{Image deblurring problem: total number of linear system solves
for different noise levels with the number of iterations in parentheses.}
\label{tab:id}
\end{table}

\begin{figure}[t]
\centerline{\includegraphics[width=\textwidth]{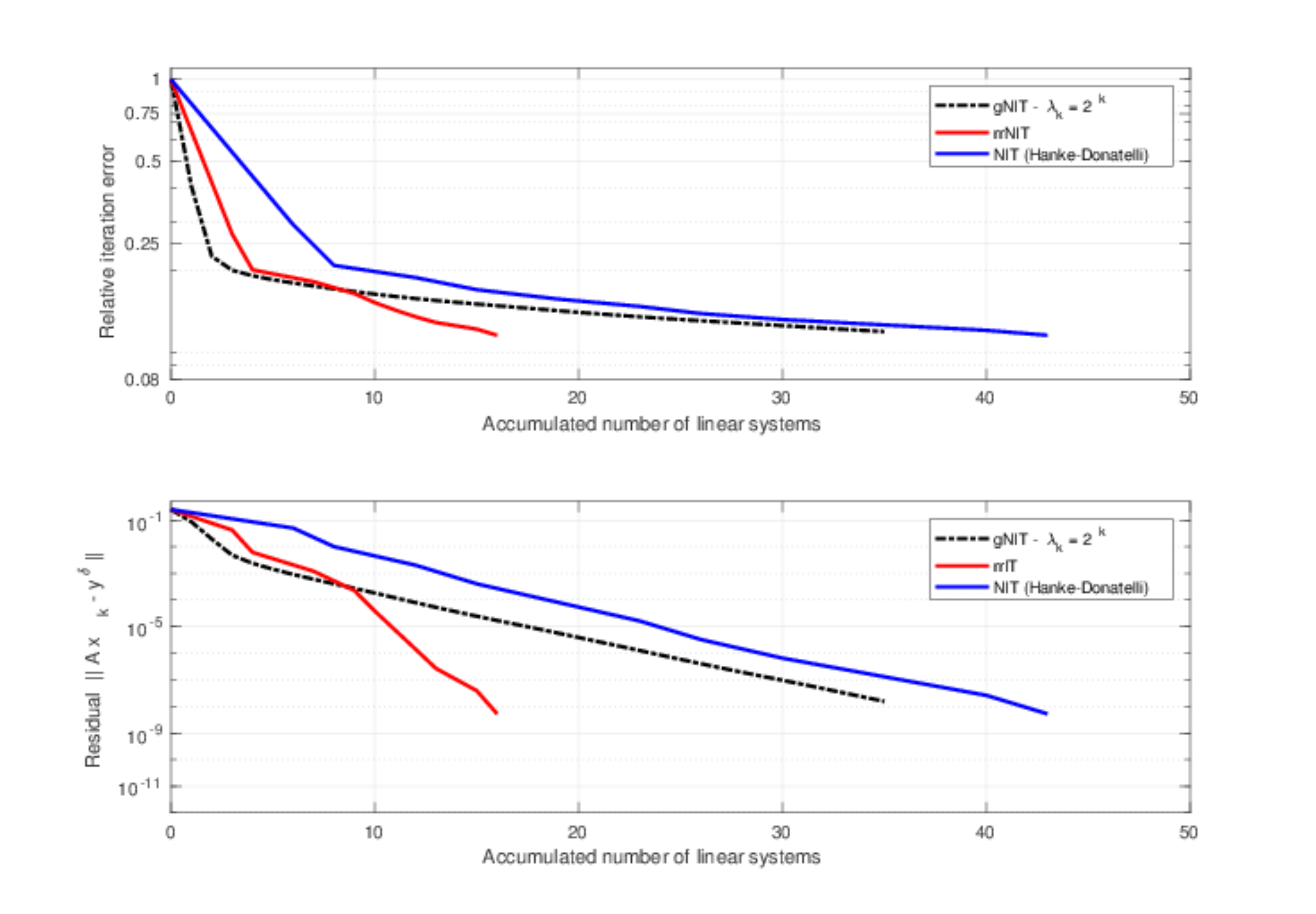} }
\vskip-0.5cm
\caption{\small Image deblurring problem: {\color{black} third scenario $\delta = 10^{-6} \%$}.
(TOP) Relative iteration error; (BOTTOM) Residual.}
\label{fig:ID-numerics}
\end{figure}

The restored images for the third scenario ($\delta = 10^{-6} \%$) are presented
in Figure~\ref{fig:ID-restored}. From left to right: gNIT, NIT in \cite{DoHa13},
and rrNIT.
\begin{figure}[t]
\centerline{\includegraphics[width=0.35\textwidth]{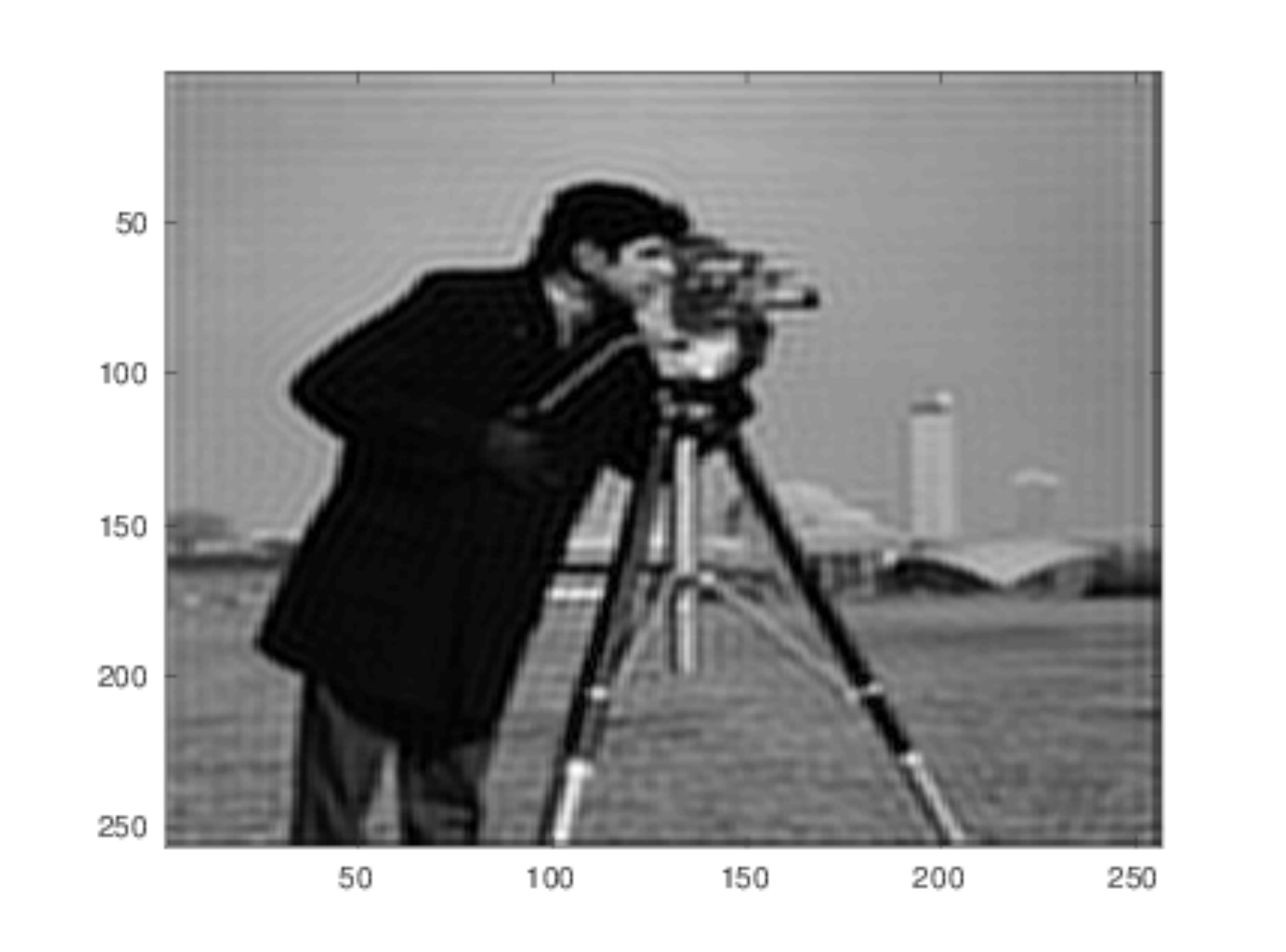}
            \includegraphics[width=0.35\textwidth]{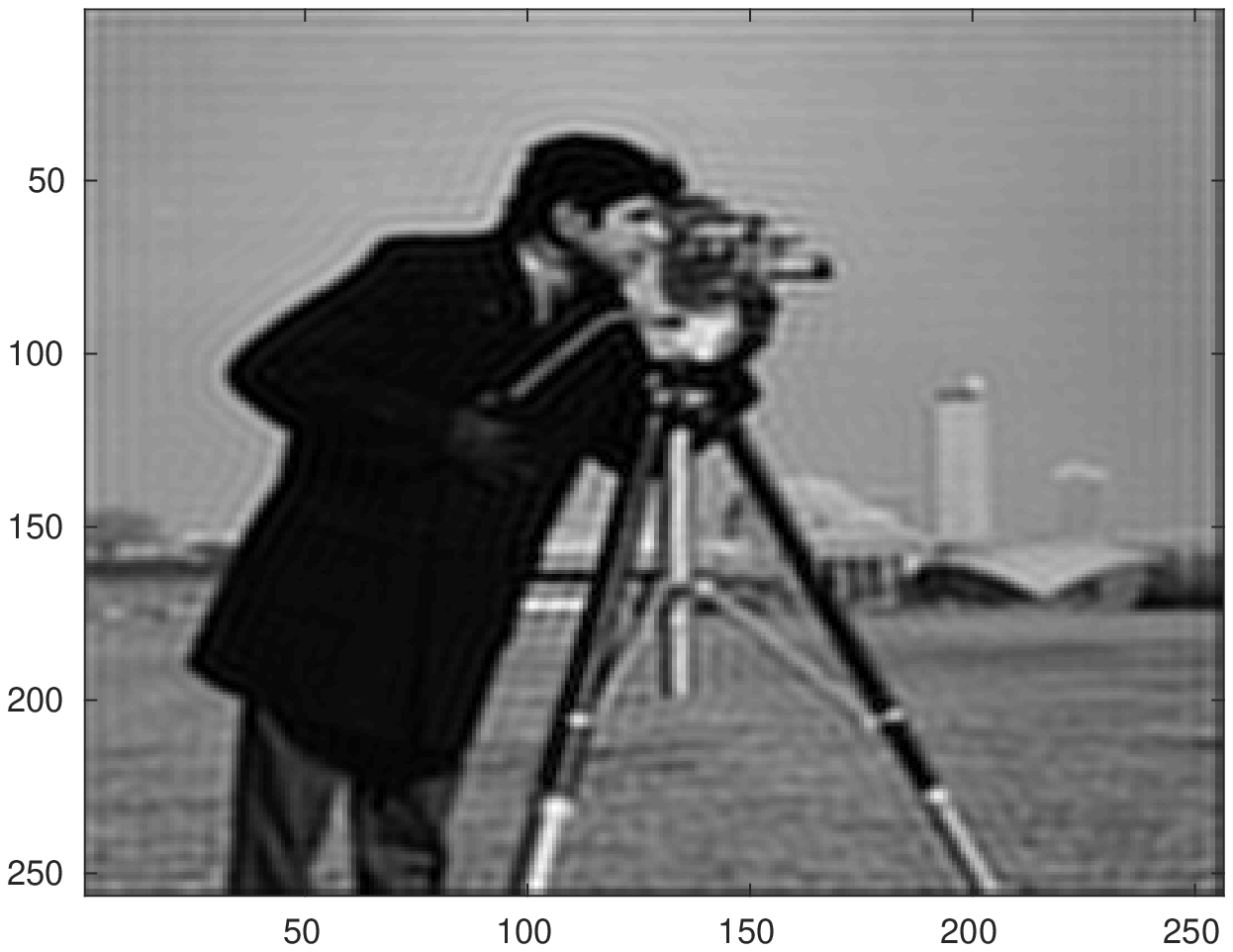}
            \includegraphics[width=0.35\textwidth]{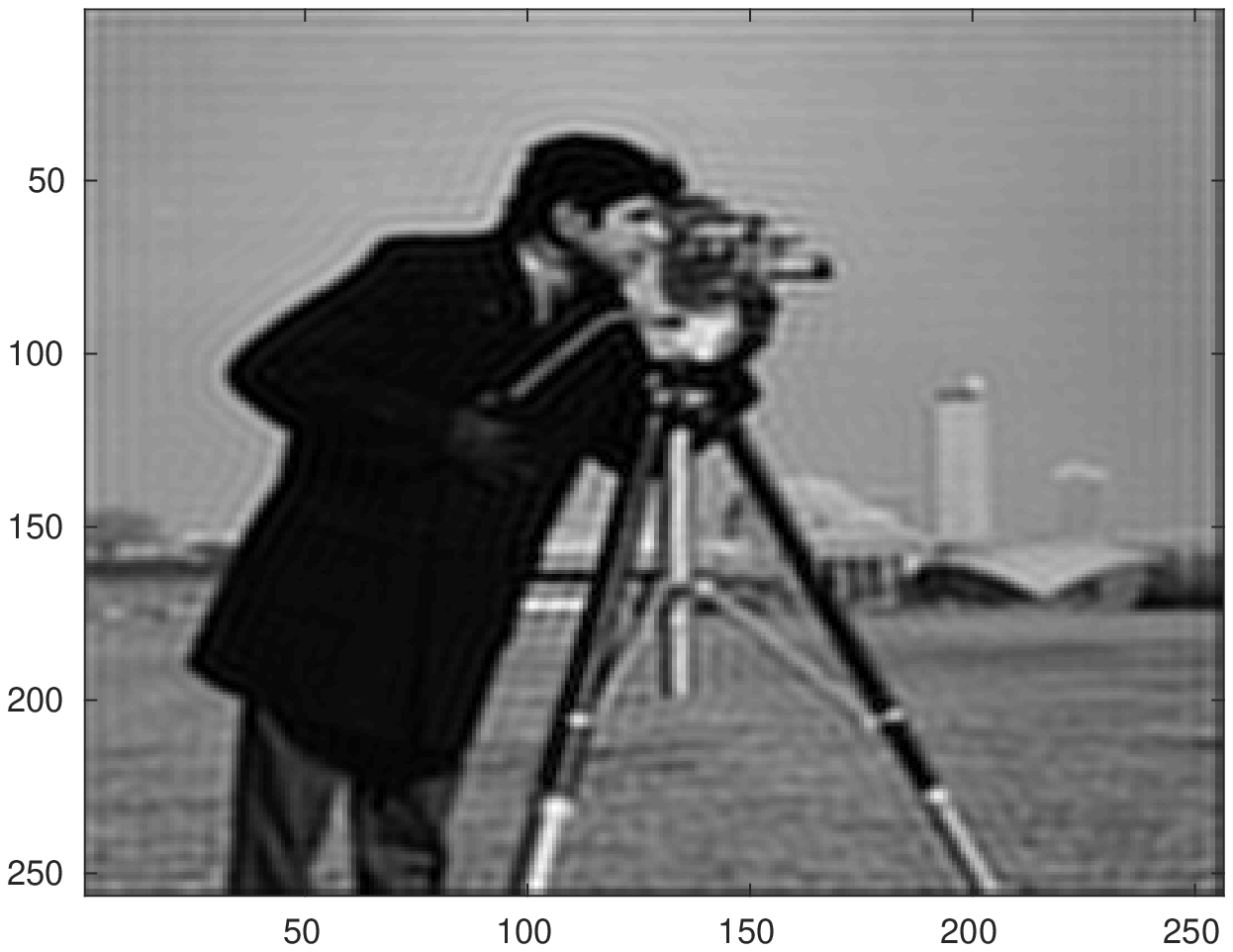} } \vskip0.3cm
\vskip-0.5cm \centerline{\hfill (a) \hspace{5cm} (b) \hspace{5cm} (c) \hfill}
\caption{\small Image deblurring problem: restored images for the third scenario
($\delta = 10^{-6} \%$). \ (a) gNIT, \ (b) NIT in \cite{DoHa13}, \ (c) rrNIT.}
\label{fig:ID-restored}
\end{figure}

\subsection{Inverse potential problem} \label{ssec:num-ipp}

In what follows we address the {\em inverse potential problem}
\cite{FSL05, CLT09, HeRu96, DAL09}.
Generalizations of this inverse problem appear in many relevant applications
including Inverse Gravimetry \cite{Isa06, DAL09}, EEG \cite{EF06}, and EMG \cite{DAP08}.

The forward problem considered here consists in solving on a Lipschitz domain
$\Omega \subset \R^d$, for a given source function $x \in L_2(\Omega)$, the
boundary value problem
\begin{equation} \label{eq:ipp}
-\Delta u \ = \ x \, ,\ {\rm in} \ \Omega \, , \quad
u \ = \ 0 \, \ {\rm on} \ \partial\Omega\, .
\end{equation}
The corresponding inverse problem is the so called {\em inverse potential problem}
(IPP), which consists of recovering an $L_2$--function $x$, from measurements of
the Dirichlet data of its corresponding potential on the boundary of $\Omega$, i.e.,
$y := u_{\nu} |_{\partial\Omega} \in L_2(\partial\Omega)$.
This problem is modeled by the linear operator 
$A: L_2(\Omega) \to L_2(\partial\Omega)$ defined by
$A x := u_{\nu} |_{\partial\Omega}$, where $u \in H_0^1(\Omega)$ is the
unique solution of \eqref{eq:ipp} \cite{HeRu96}.  Using this notation,
the IPP can be written in the abbreviated form \eqref{eq:ip}, where the
available noisy data $y^\delta \in L_2(\partial\Omega)$ satisfies
\eqref{eq:noisy-data}.

In our experiments we follow \cite{CLT09} in the experimental setup, selecting
$\Omega = (0,1) \times (0,1)$ and assuming that the unknown parameter $x^\star$
is an $H^1$-function with sharp gradients shown in Figure~\ref{fig:IPP-setup}~%
(a).
After solving Problem~\eqref{eq:ipp} for such $x= x^\star$, we added to the exact
Dirichlet data a normally distributed noise with zero mean and suitable variance
for achieving a prescribed relative noise level.
In our numerical implementations we set $p = 0.1$, $\tau = 3$ (discrepancy
principle constant) and the initial guess $x_0 \equiv 1.5$ (constant function in $\Omega$).

As in Section~\ref{ssec:num-id}, three distinct scenarios are considered, where
the relative noise level $\norm{y-y^\delta} / \norm{y}$ corresponds to
$10^{-1} \%$, $10^{-3} \%$ and $10^{-6}\%$ respectively.

In Figure~\ref{fig:IPP-numerics} the following methods are compared for the second
scenario:
(BLACK) gNIT with $\lambda_k = 2^k$;
(RED) rrNIT method in Algorithm~\ref{alg:rrNIT-numer} (with $p = 0.1$);
(BLUE) NIT method proposed in \cite{DoHa13}.%
\footnote{As before, this NIT method was implemented with $q = 0.6$ and $\rho = 10^{-4}$.
For the computation of the Lagrange multipliers, a scalar equation was solved using an
over-relaxed Newton method and a precision of $5 \%$.}
What concerns Algorithm~\ref{alg:rrNIT-numer}, the corresponding iterate $x_5^\delta$ and
iteration error $|x^\star - x_5^\delta|$ are shown in Figure~\ref{fig:IPP-setup}~(b)
and~(c) respectively.

The pictures in Figure~\ref{fig:IPP-numerics} show:
(TOP) relative error $\norm{x^\star - x_k^\delta} / \norm{x^\star}$;
(BOTTOM) residual $\norm{A x_k^\delta - y^\delta}$.
The x-axis in the these pictures is scaled by the accumulated number of linear systems solved.

The numerical results concerning all three scenarios are summarized in Table~\ref{tab:ipp}.
In this table we show, for each scenario, the total number of linear systems solved, as
well as the number of iterations needed to reach the stop criteria.

\begin{table}[h]
\begin{center}
\begin{tabular}{r  c c c}
\hline
          {}       &    gNIT   & NIT in \cite{DoHa13} &   rrNIT   \\
\hline
$10^{-1} \%$ \ \ \ & \ 6 (\ 6) &        11 (\ 3)      &   6 (\ 3) \\
$10^{-3} \%$ \ \ \ &  10 (10)  &        34 (\ 5)      &  10 (\ 5) \\
$10^{-6} \%$ \ \ \ &  13 (13)  &        86 (\ 7)      &  12 (\ 6)  \\
\end{tabular}
\end{center}
\caption{Inverse potential problem: total number of linear systems for
different noise levels with the number of iterations in parentheses.}
\label{tab:ipp}
\end{table}

\subsection{Remarks} \label{ssec:num-remarks}

In the two above discussed inverse problems, for all scenarios, both {\em a posteriori}
NIT type methods (rrNIT in Algorithm~\ref{alg:rrNIT-numer} and NIT in \cite{DoHa13})
require similar number of steps to reach discrepancy.
However, the total numerical effort of rrNIT is much smaller than the one of
NIT in \cite{DoHa13}, and is comparable to the total numerical effort of the
gNIT method (see Tables\ref{tab:id} and~\ref{tab:ipp}).

Specially in the third scenario (small noise level), the NIT method \cite{DoHa13}
needs several Newton steps to compute the Lagrange multipliers in the final iterations.
Algorithm~\ref{alg:rrNIT-numer}, on the other hand, needs only {\color{black} a few}
Newton steps to compute each one of the Lagrange multipliers solving \eqref{eq:rat-ineq}.
\smallskip

Notice the exponential decay of the residual in the rrNIT method (Figures~\ref{fig:ID-numerics}
and~\ref{fig:IPP-numerics}), which is in accordance to Proposition~\ref{prop:decay-residual}.
We also observed exponential growth of the corresponding Lagrange multipliers.
\smallskip

The NIT method proposed in \cite{DoHa13} was implemented with $q = 0.6$ and
$\rho = 10^{-4}$ as described in \cite[Sec.5]{DoHa13}. For the computation of
the Lagrange multipliers, a scalar equation was solved using an over-relaxed Newton
method with a precision of $1\%$ (Section~\ref{ssec:num-id}) and $5 \%$
(Section~\ref{ssec:num-ipp}).

\begin{figure}[t]
\centerline{\includegraphics[width=1.25\textwidth]{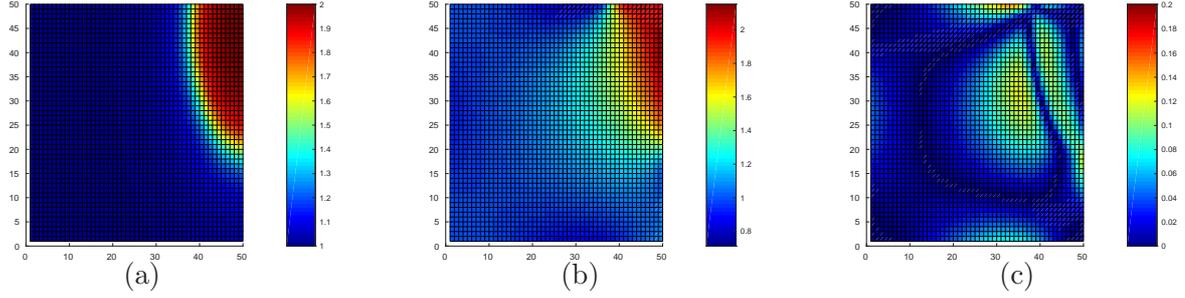} }
\vskip-0.4cm \centerline{\hfil (a) \hspace{5.0cm} (b) \hspace{5.0cm} (c) \hfil}
\vskip-0.2cm
\caption{\small Inverse potential problem: second scenario $\delta = 10^{-3} \%$.
\ (a) Exact solution $x^\star$;
\ (b) Approximate solution $x_5^\delta$ (rrNIT Algorithm~\ref{alg:rrNIT-numer});
\ (c) Iteration error $|x^\star - x_5^\delta|$ {\color{black} (absolute error pixel-wise)}.}
\label{fig:IPP-setup}
\end{figure}

\begin{figure}[ht]
\centerline{\includegraphics[width=\textwidth]{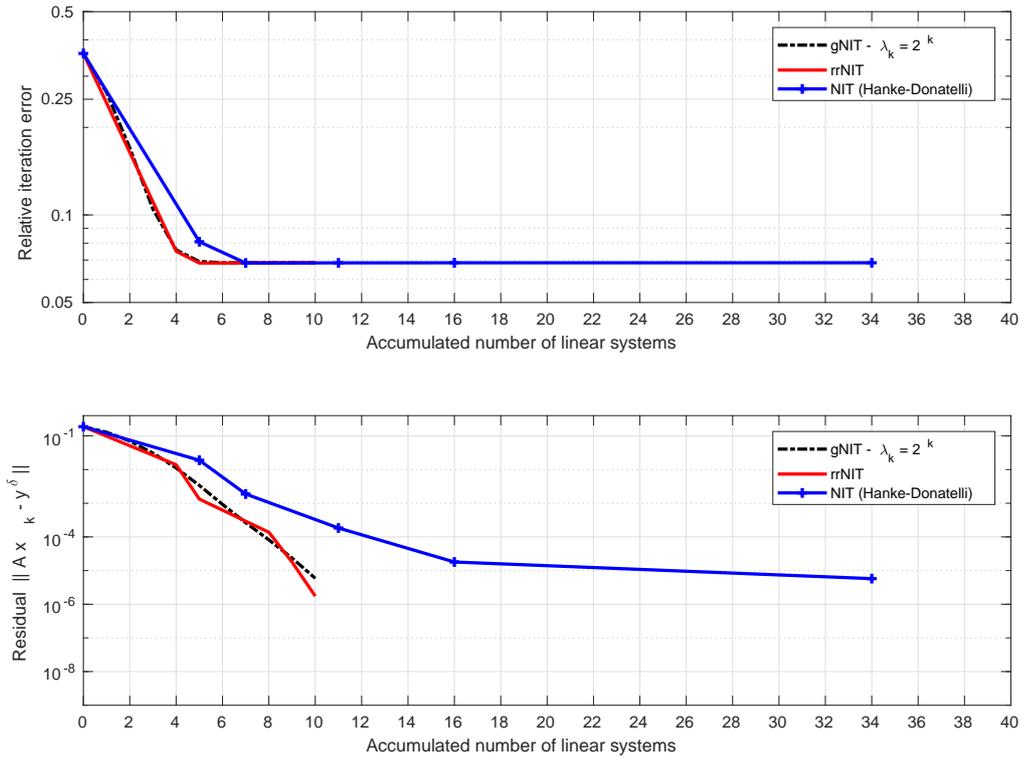} }
\vskip-0.5cm
\caption{\small Inverse potential problem: second scenario $\delta = 10^{-3} \%$.}
\label{fig:IPP-numerics}
\end{figure}

\section{Conclusions} \label{sec:conclusion}

We investigate NIT type methods for computing stable approximate
solutions to ill-posed linear operator equations.  The main
contributions of this article is a novel strategy for choosing a
sequence of Lagrange multipliers for the NIT iteration, allowing each
of this multipliers to belong to a non-degenerate interval.
We also
derived an efficient numerical algorithm based on this strategy
for computing the required Lagrange multipliers.

We prove monotonicity of the proposed rrNIT method, and exponential
decay of the residual $\norm{A x_k^\delta - y^\delta}^2$.
Moreover, we provide estimates to the``gain''\,
$\norm{x^\star - x_{k-1}^\delta}^2 - \norm{x^\star - x_k^\delta}^2$,
and to the Lagrange multipliers $\lambda_k$. A convergence proof
in the case of exact data is provided.

An algorithmic implementation of the rrNIT method is proposed
(Algorithm~\ref{alg:rrNIT-numer}, where the computation of Lagrange
multipliers are accomplished using an over relaxed Newton-like method,
with appropriate choice of the initial guess.
The resulting rrNIT method is competitive with gNIT and also with other
commonly used \emph{a posteriori} method ; not only from the point of
view of the total number of iterations, but also from the point of
view of the overall numerical effort required.

Our algorithm is tested for two well known applications with three noise
levels: the inverse potential problem, and the image deblurring
problem. The results obtained validate the efficiency of our method.

\section*{Acknowledgments}

The work of R.B. is supported by the research council of  the Alpen-Adria-Universit\"at
Klagenfurt (AAU) and by the Karl Popper Kolleg ``Modeling-Simulation-Optimization''
funded by the AAU and by the Carinthian Economic Promotion Fund (KWF).
A.L. acknowledges support from the research agencies CAPES, CNPq
(grant 311087/2017-5), and from the AvH Foundation.
The work of B.F.S. was partially supported by CNPq (grants 474996/2013-1,
302962/2011-5) and FAPERJ (grant E-26/102.940/2011).

We thank the anonymous referees for the constructive criticism and corrections
which improved the original version of this work.

\appendix
\section*{Appendix A} \label{ap:pcode}

In what follows we present a detailed algorithm for the rrNIT method, which
takes into account the above discussed strategies, namely: initial guess
choice and  over-relaxation.

Algorithm~\ref{alg:rrNIT-numer}, an implementable rrNIT method for solving
ill-posed linear problems, is written in a tutorial way.%
\footnote{Indeed, the inversion of $(I + \lambda A^*A)$ is not always possible.}
Presented in this form, one recognizes that the major computational effort in each
iteration consists in the computation of the $M_\lambda$ operators.
In the first iteration ($k=1$) this task is solved in steps [3.3] and [3.7]; in the
subsequent iterations it is solved in the Newton-method [3.7].

The above discussed choice of the initial guess $\lambda_{k,0}$ for the Newton-method
\eqref{eq:newton-step-or} is evaluated in step [3.3]. Moreover, the computation of
the over-relaxation parameters $\omega_j$ is implemented in loop [3.7].
\begin{algorithm}
\begin{center}
\fbox{\parbox{14.2cm}{
$[1]$ choose an initial guess \ $x_0 \in X$;
      \vspace{0.1cm}

$[2]$ choose \ $p \in (0,1)$, \ $\tau > 1$ \ and set \ $k := 0$;
      \vspace{0.1cm}

$[3]$ while \ $\big( \norm{Ax_k - y^\delta}_Y \, > \, \tau\delta \big)$ \ do
      \vspace{0.1cm}

\ \ \ \ $[3.1]$ $k \, := \, k + 1$;
        \vspace{0.1cm}

\ \ \ \ $[3.2]$ $\theta_k \, := \, p\norm{Ax_{k-1}^\delta - y^\delta}_Y + (1-p)\delta$;
        \vspace{0.1cm}

\ \ \ \ $[3.3]$ if\, ($k = 1$)\, then \\
\mbox{\hskip1.8cm}  $\lambda_{k,0} \, := \, \norm{A x_{k-1}^\delta - y^\delta}
                    \big( \norm{A x_{k-1}^\delta - y_\delta} - \theta_k \big)
                    \, / \, \norm{A^* (A x_{k-1}^\delta - y^\delta)}^2$; \\
\mbox{\hskip1.8cm}  $M_{\lambda_{k,0}} \, := \, (I + \lambda_{k,0} A^*A)^{-1}$; \\
\mbox{\ \ \ \ \ \ \ \ \ \,} else \\
\mbox{\hskip1.8cm}  $\lambda_{k,0} \, := \, \lambda_{k-1}$; \\
\mbox{\hskip1.8cm}  $M_{\lambda_{k,0}} \, := \, M_{\lambda_{k-1,1}}$; \\
\mbox{\ \ \ \ \ \ \ \ \ \,} endif
        \vspace{0.1cm}

\ \ \ \ $[3.4]$ $x_{\lambda_{k,0}} \, := \, x_{k-1}^\delta \, - \,
                    \lambda_{k,0} \, M_{\lambda_{k,0}} \, A^*(A x_{k-1}^\delta - y^\delta)$;
        \vspace{0.1cm}

\ \ \ \ $[3.5]$ compute $G_k(\lambda_{k,0}) \, = \, \norm{ A x_{\lambda_{k,0}} - y^\delta}^2$;
        \vspace{0.1cm}

\ \ \ \ $[3.6]$ $j := 0$; \ $\omega_0 := 1$;
        \vspace{0.1cm}

\ \ \ \ $[3.7]$ while \ $\big( G_k(\lambda_{k,j}) \, > \, \theta_k^2 \big)$  \ do \\
\mbox{\hskip1.8cm} compute $DG_k(\lambda_{k,j}) \, = \, \big\ipl A^*(A x_{\lambda_{k,j}} - y^\delta) \, , \,
                   M_{\lambda_{k,j}} \, A^* (A x_{\lambda_{k,j}} - y^\delta) \big\ipr$;
                   \vspace{0.1cm}

\mbox{\hskip1.8cm} $j := j + 1$;
                   \vspace{0.1cm}

\mbox{\hskip1.8cm} $\lambda_{k,j} \, := \, \lambda_{k,j-1} - \omega_{j-1} \, G_k(\lambda_{k,j-1}) / DG_k(\lambda_{k,j-1})$;
                   \vspace{0.1cm}

\mbox{\hskip1.8cm} $M_{\lambda_{k,j}} \, := \, (I + \lambda_{k,j} A^*A)^{-1}$;
                   \vspace{0.1cm}

\mbox{\hskip1.8cm} $x_{\lambda_{k,j}} \, := \, x_{k-1}^\delta \, - \,
                   \lambda_{k,j} \, M_{\lambda_{k,j}} \, A^*(A x_{k-1}^\delta - y^\delta)$;
                   \vspace{0.1cm}

\mbox{\hskip1.8cm} compute $G_k(\lambda_{k,j}) \, = \, \norm{ A x_{\lambda_{k,j}} - y^\delta}^2$;
                   \vspace{0.1cm}

\mbox{\hskip1.8cm} update over-relaxation parameter \ $\omega_{j}$;

\mbox{\ \ \ \ \ \ \ \ \ \,} end of while [3.7]
        \vspace{0.1cm}

\ \ \ \ $[3.8]$ $x_{k}^\delta \, := \, x_{\lambda_{k,j}}$; \ \ $\lambda_k \, := \, \lambda_{k,j}$;
        \vspace{0.1cm}

\ \ \ \ end of while [3]
} }
\end{center} \vskip-0.5cm
\caption{rrNIT algorithm.} \label{alg:rrNIT-numer}
\end{algorithm}

\bibliographystyle{amsplain}
\bibliography{projected-iT}

\end{document}